\let\csname equation*\endcsname\relax
\let\csname endequation*\endcsname\relax
\setlist[1]{labelindent=\parindent,
 }
\definecolor{dark-gray}{gray}{0.3}
\definecolor{dkgray}{rgb}{.4,.4,.4}
\definecolor{dkblue}{rgb}{0,0,.5}
\definecolor{medblue}{rgb}{0,0,.75}
\definecolor{rust}{rgb}{0.5,0.1,0.1}
\newtheoremstyle{myThm} 
    {\topsep}                    
    {\topsep}                    
    {\itshape}                   
    {}                           
    {\bfseries}                   
    {.}                          
    {.5em}                       
    {}  
\newtheoremstyle{myRem} 
    {\topsep}                    
    {\topsep}                    
    {}                   
    {}                           
    {}                   
    {.}                          
    {.5em}                       
    {}  
\newtheoremstyle{myDef} 
    {\topsep}                    
    {\topsep}                    
    {}                   
    {}                           
    {\bfseries}                   
    {.}                          
    {.5em}                       
    {}  
\theoremstyle{myThm}
\newtheorem{theorem}{Theorem}[section]
\newtheorem{lemma}[theorem]{Lemma}
\newtheorem{corollary}[theorem]{Corollary}
\theoremstyle{myRem}
\newtheorem{remark}[theorem]{Remark}
\theoremstyle{myDef}
\let\originalleft\left
\let\originalright\right
\renewcommand{\left}{\mathopen{}\mathclose\bgroup\originalleft}
\renewcommand{\right}{\aftergroup\egroup\originalright}
\renewcommand{\phi}{\varphi}
\providecommand{\mathbbm}{\mathbb} 
\newcommand{\R}{\mathbbm{R}}
\newcommand{\cH}{\mathcal{H}}
\newcommand{\cA}{\mathcal{A}}
\newcommand\NoDo{\renewcommand\algorithmicdo{}}
\newcommand\NoThen{\renewcommand\algorithmicthen{}}
\newcommand{\upperRomannumeral}[1]{\uppercase\expandafter{\romannumeral#1}}
\renewcommand{\hat}{\widehat}
\begin{document}
\title{Ensemble Kalman Methods With Constraints}
\author{David J. Albers$^{1,2}$, Paul-Adrien Blancquart$^3$, Matthew E. Levine$^4$, Elnaz Esmaeilzadeh Seylabi$^5$, Andrew Stuart$^4$}

\address{$^1$ Department of Biomedical Informatics, Columbia University, New York, NY 10032}
\address{$^2$ Department of Pediatrics, Division of Informatics, University of Colorado Medicine, Aurora, CO 80045}
\address{$^3$ Mines ParisTech, PSL Research University, Paris, France}
\address{$^4$ Department of Computational and Mathematical Sciences, California Institute of Technology, Pasadena, CA 91125}
\address{$^5$ Department of Mechanical and Civil Engineering, California Institute of Technology, Pasadena, CA 91125}
\eads{\mailto{david.albers@ucdenver.edu}, \mailto{ paul-adrien.blancquart@mines-paristech.fr}, \mailto{mlevine@caltech.edu}, \mailto{elnaz@caltech.edu}, \mailto{astuart@caltech.edu}}
\begin{abstract}
Ensemble Kalman methods constitute an increasingly important tool in both
state and parameter estimation problems. Their popularity stems from
the derivative-free nature of the methodology which may be readily applied
when computer code is available for the underlying state-space dynamics
(for state estimation) or for the parameter-to-observable map (for
parameter estimation). There are many applications in which it is
desirable to enforce prior information in the form of equality or
inequality constraints on the state or parameter. This paper establishes
a general framework for doing so, describing a widely applicable methodology,
a theory which justifies the methodology, and a set of numerical experiments
exemplifying it.
\end{abstract}
\noindent{\it Keywords: ensemble Kalman methods, equality and inequality
constraints, derivative-free optimization, convex optimization}

\submitto{\IP}

\section{Introduction}

\subsection{Overview}

Kalman filter based methods have been enormously successful in both
state and parameter estimation problems.
However, a major disadvantage of such methods
is that they do not naturally take constraints into account.
The ability to constrain a system often has a number of advantages that
can play an important role in state and parameter estimation:
they can be used to enforce physicality of modeled systems
(non-negativity of physical quantities, for example); relatedly they can be used
to ensure that computational models are employed only within state
and parameter regimes where the model is well-posed;
and finally the application of constraints may provide robustness to
outlier data.
Resulting improvements in algorithmic efficiency and performance, by
means of enforcing constraints, has been demonstrated in the recent
literature in a diverse set of fields, including process control
\cite{teixeira2010unscented},
biomechanics \cite{bonnet2017constrained}, cell energy metabolism \cite{goffaux2011cell}, medical imaging \cite{lei2012dynamic},
engine health estimation \cite{simon2010constrained}, weather forecasting \cite{janjic2014conservation}, chemical engineering \cite{yang2014inequality}, and hydrology \cite{wang2009state}.
{Within the Kalman filtering literature the need to incorporate constraints is
widely recognized and has been addressed in a systematic fashion by viewing
Kalman filtering from the perspective of optimization. Indeed this optimization
perspective leads naturally to
many extensions, and to the incorporation of constraints in particular.
Including constraints in Kalman filtering, via optimization, lends itself
to an elegant mathematical framework, to a practical computational framework,
and has potential in numerous applications. Surveys of the work may be found in the
papers of Aravkin, Burke and co-workers \cite{aravkin2014optimization,aravkin2017generalized}
and our work in this paper may be viewed as generalizing their perspective to the ensemble setting.}

In the probabilistic view of filtering methods, constraints may be
introduced by moving beyond the Gaussian assumptions that underpin
Kalman methods and imposing constraints through the prior distributions
on states and/or parameters. This, however, can create significant
computational burden as the resulting distributions cannot be represented
in closed form, through a finite number of parameters, in the way that
Gaussian distributions can be.
Here we circumvent this issue by taking the viewpoint that
ensemble Kalman methods constitute a form of derivative-free optimization
methodology, eschewing the probabilistic interpretation.
The ensemble is used to calculate surrogates for derivatives.
With this optimization perspective, constraints may be included
in a natural way. Standard ensemble Kalman methods employ
a quadratic optimization problem encapsulating relative strengths of
belief in the predictions of the model and the data; these optimization
problems have explicit analytic solutions. To impose constraints
the optimization problem is solved only within the constraint set; when
the constraints form a non-empty closed convex set, this constrained
optimization problem has a unique solution.

In this introductory section, we give a literature
review of existing work in this setting, we describe
the contributions in this paper, and we outline notation used
throughout.

\subsection{Literature Review}

Overviews of state estimation using Kalman based methods
may be found in \cite{evensen2009data,reich2015probabilistic,law2015data,carrassi2018data}.
The focus of this article is on ensemble based Kalman methods,
introduced by Evensen in \cite{evensen1994sequential} and further
developed in  \cite{burgers1998analysis,evensen2009data}.
The extension of the ensemble Kalman methodology to
parameter estimation and inverse problems is overviewed in
\cite{oliver2008inverse}, especially for oil reservoir applications,
and in an application-neutral formulation in \cite{iglesias2013ensemble}.
Equipping Kalman-based methods with constraints can be desirable for a
 variety of inter-linked reasons described in the previous subsection: to enforce known physical boundaries
in order to improve estimation accuracy; to operationalize filtering of
a model which is ill-posed in subsets of its state or parameter space;
and to provide robustness to noisy data and outlier events.

In extending the Kalman filter to non-Gaussian settings, a number of
methods may be considered. Particle filters provide the natural methodology
if propagation of probability distributions is required
for state \cite{doucet2001introduction} or parameter \cite{del2006sequential}
estimation. In the optimization setting, there are three primary methodologies:
the extended Kalman filter, the unscented Kalman filter and the
ensemble Kalman filter. The extended Kalman filter is based on linearization of the nonlinear system and therefore needs the computation of derivatives for propagation of the state covariance; this makes them unattractive in
high dimensional problems. Unscented and ensemble Kalman filters, on the other hand, can be considered as particle-based methods which are derivative-free. In the unscented Kalman filter, the particles (sigma points) are chosen deterministically and are propagated through the nonlinear system to approximate the covariance, which is then corrected using the Kalman gain to compute the new sigma points. In the ensemble Kalman filter, the particles (ensemble members) are chosen randomly from the initial ensemble and are propagated through the dynamical system and corrected using the Kalman gain without needing to maintain the covariance.

In \cite{simon2010kalman}, and more recently in \cite{1807.03463},
overviews of different ways to impose constraints in linear and nonlinear
state estimation are presented. To ensure that the estimates satisfy the constraints, moving horizon based estimators that solve a constrained optimization problem have been proposed \cite{robertson1996moving,rao2003constrained}. The paper
\cite{vachhani2005recursive} proposed a recursive nonlinear dynamic data reconciliation (RNDDR) approach based on extended Kalman filtering to ensure that state and parameter estimates satisfy the imposed bounds and constraints. The updated state estimates in this method are obtained by solving an optimization problem instead of using the Kalman gain.
The resulting covariance calculations are, however, still similar to the Kalman
filter: that is, unconstrained propagation and correction involving the Kalman gain, which can affect the accuracy of the estimates. To eliminate this deficiency, \cite{li2018constrained} proposed a Kullback-Leibler based method to update states and error covariances by solving a convex optimization problem involving conic constraints.

On the other hand, the paper \cite{vachhani2006robust} combined the concept of the
unscented transformation \cite{julier2000new} with the RNDDR formulation. In the prediction step, they propose step sizes to scale sigma points asymmetrically to better approximate the covariance information in the presence of lower and upper bounds. Then, for the update of each sigma point, they solve a constrained optimization problem. One disadvantage of this procedure is that the chosen step sizes for scaling the sigma points can only ensure the bound constraints.
The paper \cite{teixeira2010unscented} also tested various algorithms based on constrained optimization, projection \cite{simon2006kalman} and truncation \cite{simon2010constrained} to enforce bound constraints on unscented Kalman filtering.
The paper \cite{mandela2012constrained} developed a class of estimators named constrained unscented recursive estimators to address the limitations of the unscented RNDDR method using optimization-based projection algorithms for obtaining sigma points in the presence of convex, non-convex and bound constraints.

As mentioned earlier, since the corrected covariance is used to compute the sigma points, unscented formulations always require enforcing constraints in both propagation and correction/update steps. In contrast, ensemble-based methods only require constraints to be enforced in the update step.
In this context, the paper \cite{wang2009state} tested projection and accept/reject methods to constrain ensemble members in a post-processing step,
after application of the unconstrained ensemble Kalman filter. In the former, they project the updated ensemble members to the feasible space if they violate the constraints and in the latter they enforce the updated ensemble members to obey the constraints by resampling the dynamic and/or data model errors.
On the other hand, \cite{Prakash2008ConstrainedSE,prakash2010constrained} proposed updating the state estimates in ensemble Kalman filtering by solving a constrained optimization problem while truncating the Gaussian distribution of the initial ensemble.
The paper \cite{janjic2014conservation} demonstrated how to enforce a physics-based conservation law on an
ensemble Kalman filtering based state estimation problem by formulating the filter update as a set of quadratic programming problems arising from a  linear data acquisition model
subject to linear constraints.
Here we develop this body of work on constraining ensemble Kalman
techniques, providing a unifying framework with an underpinning theoretical
basis.

\subsection{Our Contribution}

The preceding literature review demonstrates that the imposition of
constraints on state and parameter estimation procedures is highly
desirable. It also indicates that ensemble Kalman methods offer
the most natural context in which to attempt to do this, as extended
Kalman methods do not scale well to high dimensional state or parameter
space, whilst the unscented filter does not lend itself as naturally
to the incorporation of constraints.

In this paper we build on the application-specific papers
\cite{wang2009state,janjic2014conservation} which
demonstrate how to impose a number of particular constraints on
ensemble based parameter and state estimation problems
respectively. We formulate a very general methodology which
is application-neutral and widely applicable, thereby making the
ideas in \cite{wang2009state,janjic2014conservation} accessible
to a wide community of researchers working in inverse problems
and state estimation. We also
describe a straightforward mathematical analysis which
demonstrates that the resulting algorithms are well-defined
since they involve the solution of quadratic minimization problems
subject to convex constraints at each step of the algorithm; these
optimization problems have a unique solution. And finally we showcase the
methodology on two applications, one from biomedicine and one from
seismology.
All of the algorithms discussed are clearly stated in pseudo-code.

Section \ref{sec:state} outlines the ensemble Kalman (EnKF) methodology
for state estimation, with and without constraints. In section
\ref{sec:inverse} the same program is carried out for ensemble
Kalman inversion (EKI). Section \ref{sec:num} describes the numerical
experiments which illustrate the foregoing ideas.

\subsection{Notation}
Throughout the paper we use $\mathbb{N}$ to denote the positive integers $\{1,2,3, \cdots \}$
and $\mathbb{Z}^+$ to denote the non-negative integers $\mathbb{N} \cup \{0\}=\{0,1,2,3, \cdots \}.$
The matrix $I_M$ denotes the identity on $\mathbb{R}^M.$
We use $|\cdot|$
to denote the Euclidean norm, and the corresponding
inner-product is denoted $\langle \cdot, \cdot \rangle.$
A symmetric, square matrix $A$
is positive definite (resp. positive semi-definite) if the quadratic
form $\langle u, Au \rangle$ is positive (resp. non-negative)
for all $u \ne 0$. By
$| \cdot |_B $ we denote the weighted norm defined by $|v|_B^2  = v^* B^{-1} v $ for any positive-definite $B$.
The corresponding weighted Euclidean inner-product is given by
$\langle \cdot,  \cdot \rangle_{B}:=\langle \cdot, B^{-1}\cdot \rangle.$
We use $\otimes$ to denote the outer product between
two vectors: $(a \otimes b)c=\langle b,c \rangle a.$

\section{Ensemble Kalman State Estimation}
\label{sec:state}

\subsection{Filtering Problem} \label{sec:filtering_problem}
Consider the discrete-time dynamical system with noisy state transitions and noisy observations in the form:
\begin{align*}
	\text{Dynamics Model:}  \quad v_{j+1} &= \Psi(v_j) + \xi_j, \quad j \in \mathbb{Z}^+ \\
	\text{Data Model:}  \quad y_{j+1} &= Hv_{j+1} + \eta_{j+1}, \quad j \in \mathbb{Z}^+ \\
	\text{Probabilistic Structure:}  \quad v_0 &\sim N(m_0, C_0), \quad \xi_j \sim N(0, \Sigma), \quad \eta_j \sim N(0, \Gamma)\\
 \text{Probabilistic Structure:} \quad v_0 &\perp \{\xi_j\} \perp \{\eta_j\} \text{ independent}
\end{align*}

We assume that $\cH_1, \cH_2$ are finite dimensional Hilbert spaces. Then
$v_j \in \cH_1$,  and \(\Psi:\cH_1 \mapsto \cH_1\) is the state-transition operator. The operator \(H:\cH_1 \mapsto \cH_2\) is the linear observation operator
and $y_j \in \cH_2.$
The covariance operators $C_0,\Sigma$ are assumed
to be invertible.
The objective of filtering is to estimate the state $v_j$ of the dynamical
systems at time $j$, given the data $\{y_\ell\}_{\ell=1}^{j}.$

\begin{remark}

\begin{itemize}

\item {We may extend the methodology in this paper to the setting
where $\cH_1, \cH_2$ are separable infinite dimensional Hilbert spaces.
The covariance operators $C_0,\Sigma$ are assumed trace-class on $\cH_1$,
and $\Gamma$ on $\cH_2$ to ensure that the initial condition $v_0$
and the noises $\xi_j$ and $\eta_j$ live in
$\cH_1, \cH_1$ and $\cH_2$ (respectively) with probability one.
The update formulae we derive require operator composition and
inversion, together with minimization of quadratic functionals on
$\cH_1$ subject to convex constraints. Provided all of these operations
can be carried out, then the methods derived here
are well-defined in the general Hilbert space setting.
This fact is important because
it means that the methods derived have a robustness to mesh refinement
and similar procedures arising when the problem of interest is specified
via a partial differential equation, or other infinite dimensional problem.}

\item We restrict attention to linear observation operators $H$ because this leads to solvable quadratic optimization problems within the context of Kalman-based methods.
In principle, a non-linear observation operator could be used, but the optimization problems defining the algorithms arising in this paper might not have a unique solution in this setting.
\end{itemize}
\end{remark}

\subsection{Ensemble Kalman Filter}
The ensemble Kalman filter is a particle-based sequential optimization
approach to the state estimation problem.
The particles are denoted by $\{{v}_{j}^{(n)}\}^{N}_{n=1}$ and represent
a collection of $N$ candidate state estimates at time $j$.
The method proceeds as follows.
The state of all the particles at time $j+1 $ are predicted
using the dynamics model
to give $\{\widehat{v}_{j+1}^{(n)}\}^{N}_{n=1}$.
The resulting empirical covariance of the particles is then used
to define the objective function $I_{{\rm filter},j,n}(v)$, which encapsulates the model-data compromise.
This is minimized in order to obtain the updates $\{{v}_{j+1}^{(n)}\}^{N}_{n=1}.$
{To understand the origin of this optimization perspective on ensemble
Kalman methods we argue as follows. In equation (4.10) of \cite{stuart2015data}, it is shown that the data incorporation step of the Kalman filter may be written as a quadratic optimization problem for the state.
In equation (4.15) of \cite{stuart2015data}, the ensemble Kalman
filter is written by using this quadratic minimization principle with an empirically (from the ensemble) computed covariance.}

The prediction step is
\begin{subequations}
\label{eq:deenz}
\begin{align}
\widehat{v}_{j+1}^{(n)} &= \Psi(v_{j}^{(n)})+\xi^{(n)}_{j}, n=1,...,N \\
\widehat{m}_{j+1} &= \frac{1}{N}\sum^{N}_{n=1} \widehat{v}_{j+1}^{(n)} \\
\widehat{C}_{j+1} &= \frac{1}{N}\sum^{N}_{n=1}\bigl(\widehat{v}^{(n)}_{j+1}-\widehat{m}_{j+1}\bigr)\bigl(\widehat{v}^{(n)}_{j+1}-\widehat{m}_{j+1}\bigr)^{T}. \label{eq2pt1c}
\end{align}
\label{eq:this}
\end{subequations}
Here we have
$\xi_{j}^{(n)} \sim N(0,\Sigma)$ i.i.d..
Because the empirical covariance contains only $N-1$ independent pieces of information, \eqref{eq2pt1c} is sometimes
scaled by $N-1$ and not $N$; making this change would lead to no changes in the statements and proofs of all the theorems, and would only affect the definition of covariance within the algorithms.

Let $\mathcal{R}(\widehat{C}_{j+1})$ denote the range of $\widehat{C}_{j+1}$.
The update step is then
\begin{equation}
v_{j+1}^{(n)}=\underset{v}{\mathrm{argmin}}\,I_{\rm{filter},j,n}(v)
\label{eq:update2}
\end{equation}
where
\begin{align}
\label{eq:deenz2}
I_{{\rm filter},j,n}(v) :=& \left\lbrace
	\begin{array}{ll}
		\frac{1}{2} \vert y_{j+1}^{(n)} - Hv \vert ^{2}_{\Gamma} + \frac{1}{2} \vert v-\widehat{v}_{j+1}^{(n)} \vert ^{2}_{\widehat{C}_{j+1}} & \mbox{if } v-\widehat{v}_{j+1}^{(n)} \in \mathcal{R}(\widehat{C}_{j+1}). \\
		\infty & \mbox{otherwise.}
	\end{array}
	\right.
\end{align}

It can be useful to rewrite the objective function for the
optimization problem in an equivalent
and more standard form for input to software:
\begin{align*}
\left\lbrace
	\begin{array}{ll}
		\frac{1}{2} v^T \Bigr( H^{T} {\Gamma}^{-1} H + \widehat{C}_{j+1}^{-1} \Bigr) v - \Bigr( \widehat{C}_{j+1}^{-1^T} \widehat{v}_{j+1}^{(n)}  + H^T  {\Gamma^{-1}}^T y_{j+1}^{(n)} \Bigr)^T v & \mbox{if } v-\widehat{v}_{j+1}^{(n)} \in \mathcal{R}(\widehat{C}_{j+1}). \\
		\infty & \mbox{otherwise.}
	\end{array}
	\right.
\end{align*}

The $y_{j+1}^{(n)}$ are either identical to the data $y_{j+1}$, or found by
perturbing it randomly.

Note that $\widehat{C}_{j+1}$ is an operator of rank at most $N-1$, and thus can only be invertible when $N-1$ is larger than the dimension of $\cH_1$. For moderate- and high-dimensional systems, it is often impractical to satisfy this condition.
However, the minimizing solution can be found
by regularizing $\widehat{C}_{j+1}$ by addition of $\epsilon I$ for
$\epsilon>0$, deriving the update equations
and then letting $\epsilon \to 0.$ We give the resulting formulae, and then
justify them immediately afterwards, in the following subsubsection.
Alternatively it is possible to directly
seek a solution in $\mathcal{R}(\widehat{C}_{j+1})$, which is a subspace of
dimension $N-1$; this is done in the subsequent subsubsection.

\subsubsection{Formulation In The Original Variables}
{The well-known Kalman update formulae arising from
solution of the minimization problem \eqref{eq:update2}, \eqref{eq:deenz2}}
are as follows:
\begin{subequations}
\label{eq:update}
\begin{align}
S_{j+1} &= H\widehat{C}_{j+1}H^{T} + \Gamma \\
K_{j+1} &=\widehat{C}_{j+1}H^{T}S_{j+1}^{-1} \qquad(\rm{Kalman}\,{\rm Gain}) \\
y_{j+1}^{(n)}&=y_{j+1}+s\eta_{j+1}^{(n)},n=1,...,N\\
v_{j+1}^{(n)} &= (I-K_{j+1}H)\widehat{v}_{j+1}^{(n)}+K_{j+1}y_{j+1}^{(n)},n=1,...,N
\end{align}
\end{subequations}\\
Here
$\eta_{j}^{(n)} \sim N(0,\Gamma)$ i.i.d. and
the constant $s$ takes value $0$ or $1$.
When $s=1$ the $y_{j+1}^{(n)}$ are referred
to as perturbed observations. The choice $s=1$ is made to ensure
the correct statistics of the updates in the linear Gaussian setting when
a probabilistic viewpoint is taken,
and more generally to introduce diversity into the ensemble
procedure when an optimization viewpoint is taken.
Derivation of the formulae may be found in \cite{stuart2015data}.
In brief the formulae arise from completing the square in the
objective function $I_{{\rm filter},j,n}(\cdot)$ and then
applying the Sherman–Morrison formula to rewrite the updates in the
data space rather than state space; the latter is advantageous in many applications where $\cH_2$ has dimension much smaller than $\cH_1.$

We summarize with the following pseudo-code:

\vspace{0.1in}
\begin{algorithm}
\caption{EnKF Algorithm}
\label{alg:enkf}
\begin{algorithmic}[1]
\State Choose $\{v_{0}^{(n)}\}^{N}_{n=1}$, $j=0$
\State Predict $\{\widehat{v}_{j+1}^{(n)}\}^{N}_{n=1}$, $\widehat{C}_{j+1}$ from \eqref{eq:deenz} \label{step1a}
\State Update $\{{v}_{j+1}^{(n)}\}^{N}_{n=1}$ from \eqref{eq:update}
\State $j \gets j+1$, go to~\ref{step1a}.
\end{algorithmic}
\end{algorithm}
\vspace{0.1in}

An equivalent formulation of the minimization problem
is now given by means of a penalized Lagrangian approach
to incorporate the property that the solution of the optimization
problem lies in the range of the empirical covariance. The
perspective is particularly useful when further constraints
are imposed on the solution of the optimization problem.

\begin{theorem}
\label{t:1}
Suppose that the dimensions of $\cH_1$ and $\cH_2$ are finite. Let $j$ be in $\mathbb{Z}^+$ and $1 \leq n \leq N$. Define $y'=y_{j+1}^{(n)}-H\widehat{v}_{j+1}^{(n)}$. {Then the update formulae \eqref{eq:update}, which follow from the
minimization problem \eqref{eq:update2}, \eqref{eq:deenz2},
may be given alternatively as}
\begin{equation}
\label{e:t:1}
v_{j+1}^{(n)}=\widehat{v}_{j+1}^{(n)}+\underset{(a,v') \in \cA}{\mathrm{argmin}}\,\Bigr( \frac{1}{2} \vert y'-Hv' \vert^{2}_{\Gamma} +  \frac{1}{2} \langle a,v' \rangle \Bigr)
\end{equation}
where $\cA=\{(a,v') \in \cH_1 \times \cH_1: \widehat{C}_{j+1}a=v'\}$ and
the argmin is projected from the pair $(a,v')$ onto the $v'$ coordinate only.
Moreover $v_{j+1}^{(n)}=\underset{\epsilon\rightarrow 0}{\mathrm{lim}}\,v_{\epsilon}$ with
\begin{equation}
\label{eq:regmin}
v_{\epsilon}=\underset{v \in \cH_1}{\mathrm{argmin}}\,\Bigl( \frac{1}{2}\vert y_{j+1}^{(n)}-Hv \vert^{2}_{\Gamma} + \frac{1}{2}\vert\widehat{v}_{j+1}^{(n)}-v \vert^{2}_{\widehat{C}_{\epsilon}} \Bigr)
\end{equation}
 and $\hat{C}_{\epsilon}=\widehat{C}_{j+1}+\epsilon I$.
\end{theorem}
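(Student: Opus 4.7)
My plan is to anchor both reformulations against the closed-form Kalman update \eqref{eq:update}: the paragraph preceding the theorem already indicates that \eqref{eq:update} follows from completing the square in the quadratic objective and applying Sherman--Morrison, so it suffices to show that (i) the limit $v_\epsilon$ in \eqref{eq:regmin} equals this formula, and (ii) the $v'$ coordinate of the argmin in \eqref{e:t:1} coincides with $v_{j+1}^{(n)} - \widehat{v}_{j+1}^{(n)}$.

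I would first dispatch the regularization limit, which is the easier half. For $\epsilon > 0$ the operator $\widehat{C}_\epsilon = \widehat{C}_{j+1} + \epsilon I$ is symmetric positive definite, so the objective in \eqref{eq:regmin} is strictly convex and coercive on $\cH_1$, with a unique minimizer. The standard derivation referenced above applies verbatim to $\widehat{C}_\epsilon$, giving
\[
v_\epsilon = \widehat{v}_{j+1}^{(n)} + K_\epsilon\bigl(y_{j+1}^{(n)} - H\widehat{v}_{j+1}^{(n)}\bigr), \quad K_\epsilon = \widehat{C}_\epsilon H^T\bigl(H\widehat{C}_\epsilon H^T + \Gamma\bigr)^{-1}.
\]
Because $\Gamma$ is positive definite, $H\widehat{C}_\epsilon H^T + \Gamma$ is invertible for every $\epsilon \geq 0$, so $\epsilon \mapsto K_\epsilon$ is continuous at $\epsilon = 0$ and $v_\epsilon \to v_{j+1}^{(n)}$ as in \eqref{eq:update}.

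For the Lagrangian form I would proceed as follows. The constraint $(a,v')\in\cA$ encodes $v' \in \mathcal{R}(\widehat{C}_{j+1})$; moreover on $\cA$ the penalty $\langle a, v'\rangle = \langle a, \widehat{C}_{j+1}a\rangle$ is invariant under replacing $a$ by $a+k$ with $k\in\ker(\widehat{C}_{j+1})$ (using symmetry of $\widehat{C}_{j+1}$), so the objective is a well-defined function of $v'$ alone and the projection onto the $v'$ coordinate is unambiguous. Parametrising $v' = \widehat{C}_{j+1} a$, the problem reduces to $\min_a \tfrac{1}{2}|y' - H\widehat{C}_{j+1}a|^2_\Gamma + \tfrac{1}{2}\langle a, \widehat{C}_{j+1}a\rangle$; stationarity in $a$, rewritten in terms of $v'$, yields the first-order condition $(I + \widehat{C}_{j+1}H^T\Gamma^{-1}H)v' = \widehat{C}_{j+1}H^T\Gamma^{-1}y'$. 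The operator on the left has spectrum in $[1,\infty)$, so $v'$ is uniquely determined, and the push-through identity $(I+AB)^{-1}A = A(I+BA)^{-1}$ with $A = \widehat{C}_{j+1}H^T$, $B = \Gamma^{-1}H$ collapses this to $v' = K_{j+1}y'$, matching \eqref{eq:update}.

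The main obstacle throughout will be the non-invertibility of $\widehat{C}_{j+1}$, which makes the original weighted norm $|\cdot|_{\widehat{C}_{j+1}}$ in \eqref{eq:deenz2} formally ambiguous on the full space. The Lagrangian form sidesteps this via the auxiliary variable $a$ (unique only modulo $\ker(\widehat{C}_{j+1})$, but the projected coordinate $v'$ is unique), while the regularization form replaces the problem by a strictly convex approximation and passes to the limit. The core work is to verify that each device selects the same unique update $v_{j+1}^{(n)}$ of \eqref{eq:update}; the continuity and push-through arguments above supply exactly this verification.
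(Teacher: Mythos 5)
Your proposal is correct and follows essentially the same route as the paper: for the Lagrangian form you eliminate the constraint by substituting $v'=\widehat{C}_{j+1}a$ and recover the Kalman gain from the first-order condition via the push-through identity $(I+AB)^{-1}A=A(I+BA)^{-1}$, which is exactly the identity the paper verifies (it merely phrases the elimination through an explicit multiplier $\lambda$ that it immediately solves out); for the regularized problem you and the paper both write the strictly convex minimizer in the gain form $K_\epsilon=\widehat{C}_\epsilon H^T(H\widehat{C}_\epsilon H^T+\Gamma)^{-1}$ and pass to $\epsilon\to 0$ by continuity of matrix inversion. The only difference is that you cite the standard completing-the-square/Sherman--Morrison derivation for $v_\epsilon$ where the paper carries out the Woodbury computation explicitly, which is an acceptable level of detail here.
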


\begin{proof}
{For notational convenience denote $\widehat{C}=\hat{C}_{j+1}$.
The objective function $I_{{\rm filter},j,n}(v)$
appearing in \eqref{eq:deenz2} is infinite if and only if
$v-\widehat{v}_{j+1}^{(n)}$ is
in the range of $\widehat{C}$. Thus, since the range of $\widehat{C}$ is non-empty, we may confine minimization to the set of $(a,v') \in \cA.$
Note that $\widehat{C}$ is in general not invertible, as it has rank
{$N-1$}, which may be less than the dimension of $\cH_1$. The set $\cA$ thus
comprises all $v'$ in the range of  $\widehat{C}$ (a convex set)
and, for each such
$v'$ the set of $a$ solving $\widehat{C}a=v'$; such an $a$ is unique
upto translations in the null-space of $\widehat{C}.$ Thus $\cA$ is a convex
set. Notice that, for
such pairs $(a,v') \in \cA$,
$\langle a,v' \rangle = \vert v' \vert^{2}_{\widehat{C}}$
with $v'$ lying in the range of the operator $\widehat{C}$.
Although the element $a$ is uniquely defined only up to translations in the
nullspace of $\widehat{C}$, such translations do not change the value
of the inner product $\langle a,v' \rangle$.}
The restriction of $\widehat{C}$ over the constraint set is positive definite which means that the quadratic objective function, now depending only on $v'$, is strongly convex. Therefore the problem has a unique solution and its Lagrangian is written as:
\begin{equation*}
\mathcal{L}(v',a,\lambda)=\frac{1}{2} |y' - H v'|_{\Gamma}^2 + \frac{1}{2} \langle a, v' \rangle
+\langle \lambda, \widehat{C}a-v' \rangle
\end{equation*}
To express optimality conditions compute the derivatives and set them to zero:
\begin{subequations}
\label{eq:LM_EL}
\begin{align*}
-H^T\Gamma^{-1}(y'-Hv')+\frac12 a-\lambda & =0,\\
\frac12 v'+\widehat{C}\lambda &=0,\\
v'-\widehat{C}a &=0.
\end{align*}
\end{subequations}
The last two equations imply that $\widehat{C}(2\lambda+a)=0.$ Thus we set
$\lambda=-\frac{1}{2}a$ and drop the second equation, replacing the first by
$$-H^T\Gamma^{-1}(y'-H\widehat{C}a)+a=0.$$
Solving the resulting equation for $a$ gives
\begin{align*}
a = (H^T\Gamma^{-1}H\widehat{C}+I)^{-1}H^T\Gamma^{-1}y'.
\end{align*}
From this formula it follows that
\begin{align*}
v_{j+1}^{(n)}&=\hat{v}_{j+1}^{(n)}+v'\\
&=\hat{v}_{j+1}^{(n)}+\widehat{C}a\\
&=\hat{v}_{j+1}^{(n)}+\widehat{C}(H^T\Gamma^{-1}H\widehat{C}+I)^{-1}H^T\Gamma^{-1}y'\\
&=\hat{v}_{j+1}^{(n)}+\widehat{C}(H^T\Gamma^{-1}H\widehat{C}+I)^{-1}H^T\Gamma^{-1}(y_{j+1}^{(n)} - H \hat{v}_{j+1}^{(n)})\,.
\end{align*}
{If we define 
\begin{equation}
\label{eq:K}
K=\widehat{C}(H^T\Gamma^{-1}H\widehat{C}+I)^{-1}H^T\Gamma^{-1}
\end{equation}
then we see that
\begin{equation}
\label{eq:K2}
v_{j+1}^{(n)}=(I-KH)\hat{v}_{j+1}^{(n)}+Ky_{j+1}^{(n)}\,.
\end{equation}
This is precisely the form of the ensemble Kalman update, and 
to complete the proof of the first part of the theorem
it remains to show that this defintion of
$K$ agrees with the formulae given in \eqref{eq:update}; this 
amounts to verifying the identity
\begin{equation}
\label{eq:verify}
(H^T\Gamma^{-1}H\widehat{C}+I)^{-1}H^T\Gamma^{-1}=H^T 
(H\widehat{C}H^T+\Gamma)^{-1}.
\end{equation}
To verify this we start from the matrix identity
$$(H^T\Gamma^{-1}H\widehat{C}+I)^{-1}(H^T\Gamma^{-1}H\widehat{C}H^T+H^T)=H^T$$
noting that it may be factored to write
$$(H^T\Gamma^{-1}H\widehat{C}+I)^{-1}(H^T\Gamma^{-1})(H\widehat{C}H^T+\Gamma)=H^T.$$
Inverting $(H\widehat{C}H^T+\Gamma)$ on the right gives the desired identity
\eqref{eq:verify}.}
\bigskip

{We now study the alternative representation of the minimization
problem \eqref{eq:update2}, \eqref{eq:deenz2}, by \eqref{eq:regmin}. 
We first note that} $H^T\Gamma^{-1}H+\widehat{C}_{\epsilon}^{-1}$ is 
strictly positive definite and hence the related quadratic function is strongly convex. As a consequence we have existence and uniqueness of the solution, and the optimality condition becomes,
\begin{equation*}
(H^T\Gamma^{-1}H+\widehat{C}_{\epsilon}^{-1})v_{\epsilon}=H^T\Gamma^{-1}y_{j+1}^{(n)}+\widehat{C}_{\epsilon}^{-1}\widehat{v}_{j+1}^{(n)}\,.
\end{equation*}
Then if we apply Woodbury matrix identity we obtain
\begin{equation*}
v_{\epsilon}=(\widehat{C}_{\epsilon}-\widehat{C}_{\epsilon}H^T(H\widehat{C}_{\epsilon}H^T+\Gamma)^{-1}H\widehat{C}_{\epsilon})(H^T\Gamma^{-1}y_{j+1}^{(n)}+\widehat{C}_{\epsilon}^{-1}\widehat{v}_{j+1}^{(n)}).
\end{equation*}
{Note that the matrix multiplying $\widehat{v}_{j+1}^{(n)}$ is
$$(\widehat{C}_{\epsilon}-\widehat{C}_{\epsilon}H^T(H\widehat{C}_{\epsilon}H^T+\Gamma)^{-1}H\widehat{C}_{\epsilon})\widehat{C}_{\epsilon}^{-1}=(I-\hat{C}_{\epsilon}H^T(H\hat{C}_{\epsilon}H^T+\Gamma)^{-1}H)$$
and that the  matrix multiplying $y_{j+1}^{(n)}$ is
\begin{align*}
(\widehat{C}_{\epsilon}-&\widehat{C}_{\epsilon}H^T(H\widehat{C}_{\epsilon}H^T+\Gamma)^{-1}H\widehat{C}_{\epsilon})H^T\Gamma^{-1}\\
&=\widehat{C}_{\epsilon}H^T(I-(H\widehat{C}_{\epsilon}H^T+\Gamma)^{-1}H\widehat{C}_{\epsilon}H^T)\Gamma^{-1}\\
&=\widehat{C}_{\epsilon}H^T(H\widehat{C}_{\epsilon}H^T+\Gamma)^{-1}\Gamma\Gamma^{-1}\\
&=\widehat{C}_{\epsilon}H^T(H\widehat{C}_{\epsilon}H^T+\Gamma)^{-1}
\end{align*}
so that}
\begin{equation*}
v_{\epsilon}=(I-\hat{C}_{\epsilon}H^T(H\hat{C}_{\epsilon}H^T+\Gamma)^{-1}H)\hat{v}_{j+1}^{(n)}+\hat{C}_{\epsilon}H^T(H\hat{C}_{\epsilon}H^T+\Gamma)^{-1}y_{j+1}^{(n)}.
\end{equation*}
Finally, as $A \mapsto A^{-1}$ is continuous over the set of invertible matrices, letting $\epsilon \rightarrow 0$ gives:
\begin{equation*}
\underset{\epsilon\rightarrow 0}{\mathrm{lim}}\,v_{\epsilon}=(I-K_{j+1}H)\hat{v}_{j+1}^{(n)}+K_{j+1}y_{j+1}^{(n)}
\end{equation*}
which concludes the proof.
\end{proof}

\subsubsection{Formulation In Range Of The Covariance}

{The minimization problem for each individual particle has a
solution which, when suitably shifted, lies in the range of the empirical covariance. This allows us to seek the solution of the minimization problem as}
a linear combination of a given set of vectors, and to minimize over
the scalars which define this linear combination.
{This reformulation of the optimization problem
is widely employed in a variety of applications, such as weather forecasting, where the number of ensemble members $N$ is much smaller
than the dimension of the data space; this is because the inversion of $S$ to form the Kalman gain $K$ takes place in the data space.}

In order to implement the minimization in the $N$ dimensional subspace
we note that $I_{{\rm filter},j,n}(v)$ is infinite unless
$$v-\widehat{v}_{j+1}^{(n)}=\widehat{C}_{j+1}a$$
for some $a \in {\mathbb {R}^n}.$
From the structure of $\widehat{C}_{j+1}$ given in
(\ref{eq:deenz}c) it follows that
\begin{equation}
\label{eq:deenz3}
v=\widehat{v}_{j+1}^{(n)}+\frac{1}{N}\sum_{m=1}^N b_m e^{(m)},
\quad  e^{(m)}:=\widehat{v}^{(m)}_{j+1}-\widehat{m}_{j+1}.
\end{equation}
Here each unknown parameter $b_m \in \R$ and  $b:=\{b_m\}_{m=1}^N,$ is
the unknown vector to be determined.
This form for $v$ follows from the fact that
\begin{equation}
\label{eq:deenz35}
\widehat{C}_{j+1}=\frac{1}{N}\sum_{m=1}^N e^{(m)} \otimes e^{(m)}
\end{equation}
which in turn implies that
\begin{equation}
\label{eq:deenz4}
\widehat{C}_{j+1} a=\frac{1}{N}\sum_{m=1}^N b_m e^{(m)}.
\end{equation}
Note that the unknown vector $b$ depends on $n$ as we need to solve the
constrained minimization problem for each of the particles, indexed by
$n=1, \dots, N$; we have suppressed the dependence of $b$ on $n$
for notational simplicity.

The expression \eqref{eq:deenz3} for $v$ in terms of the $e^{(m)}$
can be substituted into \eqref{eq:deenz2} to obtain
a functional $J_{{\rm filter},j,n}(b)$ to be minimized over
$b \in {\mathbb {R}^N},$
because $v$ is an affine function of $b.$ Equation \eqref{eq:deenz3}
may be written in compact form as
\begin{equation}
\label{eq:deenzu}
v=\widehat{v}_{j+1}^{(n)}+Bb
\end{equation}
where $B$ is the linear mapping from ${\mathbb {R}^N}$ into $\cH_1$ defined by
$$Bb:=\frac{1}{N}\sum_{m=1}^N b_m e^{(m)}.$$

We now identify $J_{{\rm filter},j,n}(b)$.
We note that \eqref{eq:deenz4} is solved by taking
$$b_m= \langle e^{(m)}, a \rangle.$$
{Although $a$ is not unique, the non-uniqueness stems only
from translations in the nullspace of $\widehat{C}_{j+1}$. Translations
do affect the values taken by the $b_m$, but do not affect the
vector $v$ given by \eqref{eq:deenzu} because they result in changes
to $b$ which are in the null-space of $B$. Furthermore, for any
such solution, independently of which $a$ is chosen,}
$$\frac{1}{2} \vert v-\widehat{v}_{j+1}^{(n)} \vert ^{2}_{\widehat{C}_{j+1}}
= \frac12 \langle a, \widehat{C}_{j+1} a \rangle=\frac{1}{2N}\sum_{m=1}^N b_m^2.$$
Using this and \eqref{eq:deenzu} in the definition of
$I_{{\rm filter},j,n}(v)$ we obtain
$$J_{{\rm filter},j,n}(b)=I_{{\rm filter},j,n}\bigl(\widehat{v}_{j+1}^{(n)}+Bb\bigr)$$
and hence, from \eqref{eq:deenz2},
\begin{subequations}
\label{eq:z2}
\begin{align}
J_{{\rm filter},j,n}(b) :=& \frac{1}{2} \vert y_{j+1}^{(n)} - H\widehat{v}_{j+1}^{(n)} -HB b \vert ^{2}_{\Gamma}+\frac{1}{2N}|b|^2 \\
=&\frac{1}{2}b^T\left( B^TH^T\Gamma^{-1}HB + \frac{1}{N}I\right)b - \left( B^TH^T\Gamma^{-1}(y_{j+1}^{(n)}-H\hat{v}_{j+1}^{(n)})\right)^T b + \text{const.}
\end{align}
\end{subequations}
Once $b$ is determined it may be substituted back into
\eqref{eq:deenzu} to obtain the solution to the minimization problem.

The preceding considerations also yield the following result, concerning
the unconstrained Kalman minimization problem; its proof is a corollary of the
more general Theorem \ref{t:extra} from the next subsection, which includes
constraints in the minimization problem.

\begin{corollary}
\label{c:extra}
Suppose that the dimensions of $\cH_1$ and $\cH_2$ are finite.
Given the prediction (\ref{eq:deenz}a), the unconstrained
Kalman update formulae may be found by minimizing
$J_{{\rm filter},j,n}(b)$ from \eqref{eq:z2}
with respect to $b$ and substituting into \eqref{eq:deenzu}.
\end{corollary}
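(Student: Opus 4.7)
The strategy is to reduce the corollary to Theorem~\ref{t:1} via the change-of-variables $v = \widehat{v}_{j+1}^{(n)} + Bb$. Concretely, I would show that (i)~the map $b \mapsto \widehat{v}_{j+1}^{(n)} + Bb$ surjects onto the affine set on which $I_{{\rm filter},j,n}$ is finite, (ii)~the functionals $I_{{\rm filter},j,n}$ and $J_{{\rm filter},j,n}$ agree on corresponding points, so their minima coincide, and (iii)~the unique minimizer of $J$ maps to the unique minimizer of $I$, which by Theorem~\ref{t:1} is given by \eqref{eq:update}.

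For step (i), the identity \eqref{eq:deenz35} exhibits $\widehat{C}_{j+1}$ as a sum of outer products of the $e^{(m)}$, so $\mathcal{R}(\widehat{C}_{j+1}) = \mathrm{span}\{e^{(1)},\dots,e^{(N)}\}$. Consequently the image of $B$ coincides with $\mathcal{R}(\widehat{C}_{j+1})$, and the affine map $b \mapsto \widehat{v}_{j+1}^{(n)} + Bb$ surjects onto $\{v : v-\widehat{v}_{j+1}^{(n)} \in \mathcal{R}(\widehat{C}_{j+1})\}$, precisely the set on which $I_{{\rm filter},j,n}$ is finite. For step (ii), the equality $J_{{\rm filter},j,n}(b) = I_{{\rm filter},j,n}(\widehat{v}_{j+1}^{(n)} + Bb)$ is exactly the computation carried out in the paragraph preceding \eqref{eq:z2}, relying on the observation that $\frac{1}{2}|v - \widehat{v}_{j+1}^{(n)}|^2_{\widehat{C}_{j+1}} = \frac{1}{2N}\sum_m b_m^2$ for any $a$ solving $\widehat{C}_{j+1} a = Bb$.

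For step (iii), observe that $J_{{\rm filter},j,n}$ is strongly convex because the Hessian $B^T H^T \Gamma^{-1} H B + \frac{1}{N} I$ is strictly positive definite (the $\frac{1}{N}I$ term alone guarantees this), so $J$ admits a unique minimizer $b^{\ast}$. Let $v^{\ast} = \widehat{v}_{j+1}^{(n)} + B b^{\ast}$. Since $J(b) = I(\widehat{v}_{j+1}^{(n)} + Bb)$ and since the minimum of $I$ over the whole space equals its minimum over the surjective image of $B$ (outside of which $I \equiv \infty$), we have $I(v^{\ast}) = \min_v I(v)$. Theorem~\ref{t:1} gives that this minimizer is unique and equal to the right-hand side of \eqref{eq:update}, so substituting $b^{\ast}$ into \eqref{eq:deenzu} reproduces the unconstrained Kalman update.

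The only subtlety worth flagging is that the change of variables $b \mapsto v$ need not be injective: when the $e^{(m)}$ are linearly dependent (which is generic, since their sum vanishes and hence $B$ always has a nontrivial nullspace of dimension at least one), many values of $b$ yield the same $v$. However, this does not obstruct the argument, because the strong convexity of $J$ already forces uniqueness at the level of $b$, and all that is needed for the corollary is that the resulting $v^{\ast}$ coincides with the Kalman update; this follows from the uniqueness of the minimizer of $I$ provided by Theorem~\ref{t:1}.
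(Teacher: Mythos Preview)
Your strategy---reducing to Theorem~\ref{t:1} via the parametrization $v = \widehat{v}_{j+1}^{(n)} + Bb$---differs from the paper's, which simply invokes the corollary as the special case of Theorem~\ref{t:extra} with empty constraints. Your route is more self-contained in that it does not pass through the constrained theory, but step~(ii) contains a genuine gap: the identity $J_{{\rm filter},j,n}(b) = I_{{\rm filter},j,n}(\widehat{v}_{j+1}^{(n)} + Bb)$ fails for general $b$. The paper's computation preceding \eqref{eq:z2} starts from $a$ and \emph{defines} $b_m = \langle e^{(m)}, a \rangle$, so that $b$ automatically lies in the range of $E^T$ (where $E$ has columns $e^{(m)}$); you are running it in reverse, starting from an arbitrary $b$ and choosing $a$ with $\widehat{C}_{j+1}a = Bb$, and in that direction the identity breaks. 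Concretely, $|Bb|^2_{\widehat{C}_{j+1}} = \tfrac{1}{N}|Pb|^2$ where $P$ is the orthogonal projection onto $(\ker B)^\perp$, so the penalty term in $I_{{\rm filter},j,n}$ evaluated at $\widehat{v}_{j+1}^{(n)}+Bb$ is $\tfrac{1}{2N}|Pb|^2$, not $\tfrac{1}{2N}|b|^2$. Your own final paragraph in fact forces this: if the identity held for all $b$, then $J$ would be constant on cosets of the nontrivial subspace $\ker B$ and could not be strongly convex, contradicting step~(iii).

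The repair is short and leaves the architecture of your proof intact. One has $J_{{\rm filter},j,n}(b) = I_{{\rm filter},j,n}(\widehat{v}_{j+1}^{(n)} + Bb) + \tfrac{1}{2N}|b-Pb|^2 \geq I_{{\rm filter},j,n}(\widehat{v}_{j+1}^{(n)} + Bb)$, with equality exactly on $(\ker B)^\perp$; since the extra term penalizes only the component of $b$ in $\ker B$ and the data-misfit term is unaffected by it, the unique minimizer $b^{\ast}$ of $J_{{\rm filter},j,n}$ lies in $(\ker B)^\perp$. On that subspace the two functionals agree, and combined with the surjectivity of $B$ onto $\mathcal{R}(\widehat{C}_{j+1})$ from your step~(i) this yields $I_{{\rm filter},j,n}(\widehat{v}_{j+1}^{(n)} + Bb^{\ast}) = \min_v I_{{\rm filter},j,n}(v)$. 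Your step~(iii) and the appeal to Theorem~\ref{t:1} then go through unchanged.
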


We summarize the ensemble Kalman state estimation algorithm, using
minimization over the vector $b$, in the following pseudo-code:

\vspace{0.1in}
\begin{algorithm}
\caption{EnKF Algorithm formulated in range of covariance}
\label{alg:enkfRange}
\begin{algorithmic}[1]
\State Choose $\{v_{0}^{(n)}\}^{N}_{n=1}$, $j=0$
\State Predict $\{\widehat{v}_{j+1}^{(n)},e^{(n)}\}^{N}_{n=1}$, from \eqref{eq:deenz} \label{step1b}
\State Optimize $\{{b}^{(n)}\}^{N}_{n=1}$ as argmin of \eqref{eq:z2}
\State Update ${v}_{j+1}^{(n)} = \widehat{v}_{j+1}^{(n)} + Bb^{(n)}$ from \eqref{eq:deenzu}
\State $j \gets j+1$, go to~\ref{step1b}.
\end{algorithmic}
\end{algorithm}
\vspace{0.1in}

\subsection{Constrained Ensemble Kalman Filter}
In this subsection we introduce linear equality and inequality constraints
on the state variable into the ensemble Kalman filter.
We make prediction according to \eqref{eq:deenz}, and then incorporate data
by solving the minimization problem \eqref{eq:deenz2}
subject to the additional constraints
\begin{subequations}
\label{eq:ieqz}
\begin{align}
Fv&=f,\\
Gv &\preceq g.
\end{align}
\end{subequations}
Here $F$ and $G$ are linear mappings which, respectively,
take the state $v$ into the number of equality and inequality constraints;
the notation $\preceq$ denotes inequality componentwise.

\subsubsection{Formulation In The Original Variables}
The preceding considerations lead to the following algorithm for ensemble
Kalman filtering subject to constraints.
{The existence of a solution to the constrained minimization follows from Theorem \ref{t:extra} below.}

\vspace{0.1in}
\begin{algorithm}
\caption{Constrained EnKF Algorithm}
\label{alg:enkfCon}
\begin{algorithmic}[1]
\State Choose $\{v_{0}^{(n)}\}^{N}_{n=1}$, $j=0$
\State Predict $\{\widehat{v}_{j+1}^{(n)}\}^{N}_{n=1}$, $\widehat{C}_{j+1}$ from \eqref{eq:deenz} \label{step1c}
\State Update $\{{v}_{j+1}^{(n)}\}^{N}_{n=1}$ from \eqref{eq:update}
\NoDo
\For {$n=1:N$}
\NoThen
\If {${v}_{j+1}^{(n)}$ violates constraints in \eqref{eq:ieqz}}
	\State ${v}_{j+1}^{(n)} \gets$ argmin of \eqref{eq:deenz2} subject to \eqref{eq:ieqz}
\EndIf
\EndFor
\State $j \gets j+1$, go to~\ref{step1c}.
\end{algorithmic}
\end{algorithm}
\vspace{0.1in}

\subsubsection{Formulation In Range Of The Covariance}
The linear constraints \eqref{eq:ieqz} can be rewritten in terms
of the vector $b$, by means of \eqref{eq:deenzu}, as follows:
\begin{subequations}
\label{eq:ieqzz}
\begin{align}
FBb&=f-F\widehat{v}_{j+1}^{(n)},\\
GBb &\preceq g-G\widehat{v}_{j+1}^{(n)}.
\end{align}
\end{subequations}

We may thus predict and then optimize the objective
function $J_{{\rm filter},j,n}(b)$, given by \eqref{eq:z2},
subject to the constraints \eqref{eq:ieqzz}.
Implementation of this leads to following algorithm for ensemble
Kalman filtering subject to constraints:

\vspace{0.1in}
\begin{algorithm}
\caption{Constrained EnKF Algorithm formulated in range of covariance}
\label{alg:enkfRangeCon}
\begin{algorithmic}[1]
\State Choose $\{v_{0}^{(n)}\}^{N}_{n=1}$, $j=0$
\State Predict $\{\widehat{v}_{j+1}^{(n)},e^{(n)}\}^{N}_{n=1}$, from \eqref{eq:deenz} \label{step1d}
\State Update ${b}^{(n)} \gets$ argmin of \eqref{eq:z2}, ${v}_{j+1}^{(n)} = \widehat{v}_{j+1}^{(n)} + Bb^{(n)}$ from \eqref{eq:deenzu}
\NoDo
\For {$n=1:N$}
\NoThen
\If {${v}_{j+1}^{(n)}$ violates constraints in \eqref{eq:ieqz}}
	\State ${b}^{(n)} \gets$ argmin of \eqref{eq:z2} subject to \eqref{eq:ieqzz}
	\State Update ${v}_{j+1}^{(n)} = \widehat{v}_{j+1}^{(n)} + Bb^{(n)}$ from \eqref{eq:deenzu}
\EndIf
\EndFor

\State $j \gets j+1$, go to~\ref{step1d}.
\end{algorithmic}
\end{algorithm}
\vspace{0.1in}

Justification for the use of this algorithm,
working in the constrained space parameterized by $b$, is a consequence
of the following:

\begin{theorem}
\label{t:extra}
Suppose that the dimensions of $\cH_1$ and $\cH_2$ are finite.
The problem of finding $v^{(n)}_{j+1}$ as the minimizer of
$I_{{\rm filter},j,n}(v)$ subject to
the constraints \eqref{eq:ieqz} is equivalent to finding $b$ to minimize
$J_{{\rm filter},j,n}(b)$ subject to the constraints \eqref{eq:ieqzz} and
then using \eqref{eq:deenzu} to find $v_{j+1}^{(n)}$ from $b$. Furthermore,
both of these constrained minimization problems have a unique solution provided
that the constraint sets are non-empty.
\end{theorem}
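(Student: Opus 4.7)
The plan is to parametrize all finite-value $v$ through the affine map $v = \widehat{v}_{j+1}^{(n)} + Bb$, to prove existence and uniqueness for the $b$-problem where strong convexity is manifest, and then to transfer the conclusion back to the $v$-problem. Writing $\widehat{C} := \widehat{C}_{j+1}$ for brevity, I first restrict the feasible set of the $v$-problem to $\mathcal{V}_{\text{feas}} := \{v \in \cH_1 : v - \widehat{v}_{j+1}^{(n)} \in \mathcal{R}(\widehat{C}),\, Fv = f,\, Gv \preceq g\}$, since $I_{{\rm filter},j,n}$ is infinite outside this set. The expansion \eqref{eq:deenz35} gives $\mathcal{R}(\widehat{C}) = \mathrm{range}(B)$, so $b \mapsto \widehat{v}_{j+1}^{(n)} + Bb$ sends $\mathcal{B}_{\text{feas}} := \{b \in \R^N : FBb = f - F\widehat{v}_{j+1}^{(n)},\, GBb \preceq g - G\widehat{v}_{j+1}^{(n)}\}$, the feasible set defined by \eqref{eq:ieqzz}, onto $\mathcal{V}_{\text{feas}}$. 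This observation shows that the two feasibility sets are simultaneously non-empty, matching the hypothesis.

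Next I establish existence and uniqueness for the $b$-problem. The Hessian of $J_{{\rm filter},j,n}$ in \eqref{eq:z2} is $B^T H^T \Gamma^{-1} H B + \tfrac{1}{N}I_N$, which is strictly positive definite thanks to the Tikhonov-like term, so $J_{{\rm filter},j,n}$ is strongly convex and coercive on $\R^N$. Since $\mathcal{B}_{\text{feas}}$ is closed, convex and non-empty, a unique minimizer $b^*$ exists. I then claim that $b^*$ is orthogonal to the null space of $B$: any decomposition $b^* = b_\perp + b_0$ with $Bb_0 = 0$ leaves the constraints \eqref{eq:ieqzz} unchanged (they involve only $Bb$), while $J_{{\rm filter},j,n}(b_\perp) = J_{{\rm filter},j,n}(b^*) - \tfrac{1}{2N}|b_0|^2$, so uniqueness forces $b_0 = 0$.

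The principal technical step, where I anticipate needing the most care, is the identity $J_{{\rm filter},j,n}(b) = I_{{\rm filter},j,n}(\widehat{v}_{j+1}^{(n)} + Bb)$ for $b$ orthogonal to the null space of $B$, since $J$ and $I$ a priori differ in their quadratic regularization terms (one involves $|b|^2$, the other $|Bb|^2_{\widehat{C}}$). For such $b$, which is precisely the row space of $E := [e^{(1)},\dots,e^{(N)}]$, one writes $b = E^T a$ for some $a \in \cH_1$, whence $\widehat{C}a = \tfrac{1}{N}EE^T a = Bb$ and $|Bb|^2_{\widehat{C}} = \langle a, \widehat{C} a\rangle = \tfrac{1}{N}|b|^2$, so the two regularizers agree. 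With this identity the equivalence is immediate: every $v \in \mathcal{V}_{\text{feas}}$ has a unique preimage $b_v \in \mathcal{B}_{\text{feas}}$ orthogonal to $\mathrm{null}(B)$ (namely the minimum-norm preimage), and $I_{{\rm filter},j,n}(v) = J_{{\rm filter},j,n}(b_v) \geq J_{{\rm filter},j,n}(b^*)$, with equality only when $b_v = b^*$ by uniqueness; therefore $v^* := \widehat{v}_{j+1}^{(n)} + Bb^*$ is the unique minimizer of the $v$-problem, completing the proof.
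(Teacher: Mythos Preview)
Your argument is correct and uses the same core ingredients as the paper: the affine parametrization $v=\widehat{v}_{j+1}^{(n)}+Bb$ together with the identity $|Bb|^2_{\widehat{C}}=\tfrac{1}{N}|b|^2$ for $b\perp\mathrm{null}(B)$, which is exactly the content of the derivation in \eqref{eq:deenz3}--\eqref{eq:z2}. The organization differs somewhat. The paper establishes existence and uniqueness in the $v'$-variable by invoking Lemma~\ref{l:extra}, which argues directly that $\widehat{C}$ restricted to its range is positive definite so that $I_{{\rm filter},j,n}$ is strongly convex there; the lemma also carries the additional content that the constrained minimizer is the $\epsilon\to 0$ limit of the Tikhonov-regularized problems $v_\epsilon$. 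You instead prove existence and uniqueness first in the $b$-variable, where strong convexity is immediate from the $\tfrac{1}{N}I_N$ term, and then transfer back to $v$ via the explicit observation that the minimizer $b^*$ must lie in $(\mathrm{null}\,B)^\perp$. Your route is more self-contained for the theorem as stated and sidesteps the $\epsilon$-regularization machinery entirely; the paper's route, by going through Lemma~\ref{l:extra}, simultaneously delivers the regularization-limit characterization, which is of independent interest but not required for Theorem~\ref{t:extra} itself.
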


\begin{proof}
For notational convenience set $\widehat{v} = \widehat{v}^{(n)}_{j+1}$, $y = y^{(n)}_{j+1}$, $y' = y - H\widehat{v}$, $\widehat{C}= \widehat{C}_{j+1}$ and $\widehat{C}_{\epsilon} = \widehat{C}_{j+1} + \epsilon I$.
\bigskip

Denote
\begin{align}
&	\begin{aligned}
	v^{*} =
	& \underset{v'}{\text{argmin}}
	& & \frac{1}{2} \vert y' - H v' \vert_{\Gamma}^2 + \frac{1}{2} \langle a,v' \rangle \\
	& \text{subject to}
	& & \bullet \widehat{C}a = v'\\
	&&& \bullet Fv' = f - F\widehat{v}\\
	&&& \bullet Gv' \preceq g - G\widehat{v}\\
	\end{aligned}\label{eq1}\\
&	\begin{aligned}
	v_{\epsilon} =
	& \underset{v}{\text{argmin}}
	& & \frac{1}{2} \vert y - H v \vert_{\Gamma}^2 + \frac{1}{2} \vert v - \widehat{v} \vert_{\widehat{C}_{\epsilon}}^2\\
	& \text{subject to}
	& & \bullet Fv = f\\
	&&& \bullet Gv \preceq g\\
	\end{aligned}
	\label{eq2}
\end{align}
and
\begin{align*}
J(v) &=  \frac{1}{2} \vert y - H v \vert_{\Gamma}^2 + \frac{1}{2} \vert v - \widehat{v} \vert_{\widehat{C}}^2\\
J_{\epsilon}(v) &= \frac{1}{2} \vert y - H v \vert_{\Gamma}^2 + \frac{1}{2} \vert v - \widehat{v} \vert_{\widehat{C}_{\epsilon}}^2
\end{align*}
The part of the statement of Theorem \ref{t:extra} concerning existence of a minimizer
is a consequence of the Lemma \ref{l:extra} stated and proved below.
The second part, concerning the equivalence of minimization over $b$ and over
$v$ (or $v'$) was {shown in equations \eqref{eq:deenz3}-\eqref{eq:z2}.}
This concludes the proof.
\end{proof}

{Notice that in the following lemma, the variables $v_{\epsilon}$, $\widehat{v}$, and $v^{*}$ are analogous to $v_{j+1}^{(n)}$, $\widehat{v}_{j+1}^{(n)}$, and the minimizer over $v'$ from \eqref{e:t:1}, respectively.}

\begin{lemma}
\label{l:extra}
Suppose that the constraint sets of \eqref{eq1} and \eqref{eq2} are non empty, then $v^{*}$ exists and is unique and for all $\epsilon > 0$, $v_{\epsilon}$ exists and is unique. Furthermore $\underset{\epsilon \rightarrow 0}{lim \, v_{\epsilon}} = {\widehat{v} + v^{*}}$.
\end{lemma}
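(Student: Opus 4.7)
The plan is to handle existence and uniqueness in each of the two optimization problems by a strong-convexity argument, and then to establish the convergence $v_\epsilon \to \widehat{v} + v^*$ via a three-step compactness / liminf / identification scheme; the convergence is the real content of the lemma.

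For existence and uniqueness of $v_\epsilon$, I would note that the Hessian of $J_\epsilon$ is $H^T\Gamma^{-1}H + \widehat{C}_\epsilon^{-1}$, which is strictly positive definite because $\widehat{C}_\epsilon = \widehat{C} + \epsilon I$ is positive definite, so $J_\epsilon$ is strongly convex. The constraint set of \eqref{eq2} is a non-empty closed convex polyhedron by hypothesis, and the existence and uniqueness of $v_\epsilon$ follow. For $v^*$, I would recall from the proof of Theorem \ref{t:1} that, for any $(a,v') \in \cA$, the value $\langle a, v' \rangle$ is independent of the choice of $a$ and equals $|v'|^2_{\widehat{C}}$, where the weighted norm is well defined because $\widehat{C}$ restricted to $R(\widehat{C})$ is invertible. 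Problem \eqref{eq1} then reduces to minimizing $\tfrac12|y'-Hv'|^2_\Gamma + \tfrac12 |v'|^2_{\widehat{C}}$ over $v' \in R(\widehat{C})$ subject to the linear equality and inequality constraints, which is strongly convex on $R(\widehat{C})$ over a non-empty closed convex set; uniqueness and existence follow.

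For the convergence, I would orthogonally decompose $\cH_1 = R(\widehat{C}) \oplus \ker(\widehat{C})$ using the symmetric spectral structure of $\widehat{C}$, splitting any $u \in \cH_1$ as $u = u_R + u_K$ so that
\begin{align*}
|u|^2_{\widehat{C}_\epsilon} = \langle u_R, (\widehat{C}|_R + \epsilon I_R)^{-1} u_R\rangle + \frac{|u_K|^2}{\epsilon}.
\end{align*}
The a priori bound would come by comparison with $\tilde v := \widehat{v} + v^*$, which is feasible for \eqref{eq2}. Since $v^* \in R(\widehat{C})$, the penalty $|\tilde v - \widehat{v}|^2_{\widehat{C}_\epsilon}$ converges to $|v^*|^2_{\widehat{C}}$ as $\epsilon \to 0$ and is uniformly bounded, so $J_\epsilon(v_\epsilon) \le J_\epsilon(\tilde v)$ is uniformly bounded in $\epsilon$. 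Writing $v_\epsilon - \widehat{v} = w_\epsilon + z_\epsilon$ and applying the displayed identity, this bound will force $|z_\epsilon|^2 = O(\epsilon)$ and $|w_\epsilon|$ bounded (using that $\widehat{C}|_R$ is positive definite), so $\{v_\epsilon\}$ is precompact and every cluster point lies in $\widehat{v} + R(\widehat{C})$ since $R(\widehat{C})$ is closed in finite dimensions.

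To identify the limit I would pass to an arbitrary convergent subsequence $v_{\epsilon_k} \to v^\dagger$. By continuity of $F$ and $G$, $v^\dagger$ satisfies the constraints of \eqref{eq2}, and because $v^\dagger - \widehat{v} \in R(\widehat{C})$ it is also feasible for \eqref{eq1}. Using $w_{\epsilon_k} \to v^\dagger - \widehat{v}$ and $(\widehat{C}|_R + \epsilon_k I_R)^{-1} \to \widehat{C}|_R^{-1}$, a liminf inequality yields
\begin{align*}
\liminf_{k \to \infty} J_{\epsilon_k}(v_{\epsilon_k}) \;\ge\; \tfrac12|y-Hv^\dagger|^2_\Gamma + \tfrac12|v^\dagger - \widehat{v}|^2_{\widehat{C}},
\end{align*}
while comparison with $\tilde v$ gives the matching $\limsup \le \tfrac12|y-H\tilde v|^2_\Gamma + \tfrac12|v^*|^2_{\widehat{C}}$. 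Hence $v^\dagger - \widehat{v}$ is feasible for \eqref{eq1} with objective value no larger than that attained at $v^*$, and uniqueness from Step 1 forces $v^\dagger - \widehat{v} = v^*$. Since every subsequence shares the same limit, the full family converges.

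The main technical obstacle is keeping careful track of the singular behavior of $\widehat{C}_\epsilon^{-1}$ on $\ker(\widehat{C})$: the blow-up factor $\epsilon^{-1}|z_\epsilon|^2$ is precisely what squeezes the limit into the affine subspace $\widehat{v} + R(\widehat{C})$, which is exactly the set of translates needed for feasibility of \eqref{eq1}. Once that singular penalty is controlled through the orthogonal decomposition above, the remaining steps reduce to routine convex analysis and finite-dimensional continuity of matrix inversion.
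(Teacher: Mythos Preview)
Your proposal is correct and follows essentially the same route as the paper: strong convexity for existence/uniqueness, then the orthogonal decomposition $\cH_1 = R(\widehat{C}) \oplus \ker(\widehat{C})$ together with the comparison point $\widehat{v}+v^*$ to obtain the a priori bound, forcing the kernel component to vanish and the range component to stay bounded, followed by compactness, feasibility of the limit, and identification via uniqueness. The only cosmetic difference is that the paper establishes an exact limit for $|w_{\epsilon_m}^1|^2_{\widehat{C}_{\epsilon_m}}$ via the eigenvalue expansion, whereas you phrase the same step as a liminf/limsup sandwich; both yield the same conclusion.
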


\begin{proof}
{The proof is broken into two parts. In the first we prove existence and
uniqueness of a solution by using the idea that the constraint relating
$a$ and $v'$ renders the problem convex. In the second part of the proof
we study the $\epsilon \to 0$ limit of the regularized solution,
extracting convergent
subsequences through compactness, and demonstrating that they converge
to the desired limit.}
To prove existence and uniqueness of the solution of \eqref{eq1}, notice that it can be reformulated as
\begin{equation*}
	\begin{aligned}
	& \underset{v'}{\text{argmin}}
	& & J({\widehat{v} + v'}) \\
	& \text{subject to}
	& & \bullet \widehat{C}a = v'\\
	&&& \bullet Fv' = f - F\widehat{v}\\
	&&& \bullet Gv' \preceq g - G\widehat{v}\\
	\end{aligned}
\end{equation*}
and that the restriction of $\widehat{C}$ over its range is strictly positive definite. Hence $J$ is a strongly convex function being minimized over a non empty closed convex set. From standard theory $v^*$ exists and is unique. Then as $\widehat{C}_{\epsilon}$ is strictly positive definite, the same type of arguments provide existence and uniqueness of $v_{\epsilon}$.
\medskip

Now we prove the second part of the lemma. We note that ${\widehat{v} + v^{*}}$ matches the constraints of \eqref{eq2}. It follows that for all $\epsilon > 0$, $J_{\epsilon}(v_{\epsilon}) \leq J_{\epsilon}({\widehat{v} + v^{*}})$.
Then let us prove that $J_{\epsilon}({\widehat{v} + v^{*}}) \underset{\epsilon \rightarrow 0}{\rightarrow} J({\widehat{v} + v^{*}})$. First denote by $\lambda_{1} \leq \dots \leq \lambda_{N-1}$ the strictly positive eigenvalues of $\widehat{C}$ (recall that $\widehat{C}$ is symmetric positive semidefinite and that $\mathrm{rank}(\widehat{C}) = N-1$ almost surely). Hence $\widehat{C}_{\epsilon}^{-1} = \sum_{k=1}^{N-1} \frac{1}{\lambda_k + \epsilon}a_ka_k^{T} + \sum_{k=N}^{\mathrm{dim}(\mathcal{H}_1)}\frac{1}{\epsilon}a_ka_k^{T}$ where
the $a_k$'s are the eigenvectors of $\widehat{C}$ (the first and second sums respectively gather the vectors of the range and of the nullspace of $\widehat{C}$) . As $v^*$ lies in the range of $\widehat{C}$, it holds that $\vert v^{*} + \widehat{v} - \widehat{v} \vert_{\widehat{C}_{\epsilon}}^2 = \, \vert v^{*} \vert_{\widehat{C}_{\epsilon}}^2 = \sum_{k=1}^{N-1} \frac{1}{\lambda_k + \epsilon}(a_k^{T}v^{*})^2$. Now as the $a_k$'s do not depend on $\epsilon$, by letting $\epsilon$ tending to zero, this quantity will tend to $$\sum_{k=1}^{N-1}\frac{1}{\lambda_k}(a_k^{T}v^{*})^2=\vert v^{*}\vert_{\widehat{C}}^2=\vert v^{*} + \widehat{v} - \widehat{v} \vert_{\widehat{C}}^2.$$
Therefore it holds that $J_{\epsilon}({\widehat{v} + v^{*}}) \underset{\epsilon \rightarrow 0}{\rightarrow} J({\widehat{v} + v^{*}})$. From this we deduce that there exists $\delta > 0$ such that for all $0 < \epsilon < \delta$, $J_{\epsilon}(v_{\epsilon}) \leq J({\widehat{v} + v^{*}})+1$.
\medskip

Then set $w_{\epsilon}=v_{\epsilon}-\widehat{v}=w_{\epsilon}^{0}+w_{\epsilon}^{1}$ where $w_{\epsilon}^{0}$ lies in the nullspace of $\widehat{C}$ and $w_{\epsilon}^{1}$ in its range (recall that for a symmetric matrix nullspace and range are orthogonal) and see that $J_{\epsilon}(v_{\epsilon}) = \frac{1}{2} \vert y' - Hw_{\epsilon}  \vert_{\Gamma}^2 + \frac{1}{2} \vert w_{\epsilon} \vert_{\widehat{C}_{\epsilon}}^2$. It holds that $\frac{1}{2} \vert w_{\epsilon} \vert_{\widehat{C}_{\epsilon}}^2 \leq J_{\epsilon}(v_{\epsilon}) \leq J({\widehat{v} + v^{*}})+1$ for $\epsilon$ sufficiently small. Furthermore $\vert w_{\epsilon} \vert_{\widehat{C}_{\epsilon}}^2 = \vert w_{\epsilon}^{0} \vert_{\widehat{C}_{\epsilon}}^2+\vert w_{\epsilon}^{1} \vert_{\widehat{C}_{\epsilon}}^2=\frac{1}{\epsilon}\vert w_{\epsilon}^{0} \vert^2 +\vert w_{\epsilon}^{1} \vert_{\widehat{C}_{\epsilon}}^2$, and since this quantity is bounded from above we deduce that $w_{\epsilon}^{0} \underset{\epsilon \rightarrow 0}{\rightarrow} 0$ and that $w_{\epsilon}^{1}$ is bounded. Let $(\epsilon_{m})_{m \in \mathbb{N}}$ be a sequence of positive real numbers such that $\epsilon_{m} \underset{m \rightarrow \infty}{\rightarrow} 0$, and from the preceding extract a converging subsequence (denoted $(\epsilon_{m})_{m \in \mathbb{N}}$ for simplicity) such that $(w_{\epsilon_{m}}^{1})_{m \in \mathbb{N}}$ converges to a limit denoted $w^{*}$. As $w_{\epsilon_{m}}^{1}$ lies in $\mathcal{R}(\widehat{C})$, we can use the eigenvalue decomposition of $\widehat{C}$ to show that $\vert w_{\epsilon_{m}}^{1} \vert_{\widehat{C}_{\epsilon_{m}}}^2 \underset{m \rightarrow \infty}{\rightarrow} \vert w^{*} \vert_{\widehat{C}}^2$.
This limiting identity, and the fact that $w_{\epsilon}^{0}$ has limit $0$,
may be used to establish the first equality within
the following chain of equalities and inequalities:
\begin{align*}
J({\widehat{v} + w^{*}}) &= \underset{m \rightarrow \infty}{\rm lim} \, \frac{1}{2} \vert y' - Hw_{\epsilon_{m}}  \vert_{\Gamma}^2 + \frac{1}{2} \vert w_{\epsilon_{m}}^{1} \vert_{\widehat{C}_{\epsilon_{m}}}^2 \leq \underset{m \rightarrow \infty}{\rm lim} J_{\epsilon_{m}}(v_{\epsilon_{m}})\\
& \leq \underset{m \rightarrow \infty}{\rm lim} J_{\epsilon_{m}}({\widehat{v} + v^{*}})=J({\widehat{v} + v^{*}}).
\end{align*}
Now note that $w^{*}$ matches all the constraints of \eqref{eq1}. Indeed $w_{\epsilon_{m}}^{1}$ lies in the range of $\widehat{C}$ which is a closed space, also $v_{\epsilon_{m}}-\widehat{v}=w_{\epsilon_{m}}^{0}+w_{\epsilon_{m}}^{1} \underset{m \rightarrow \infty}{\rightarrow} w^{*}$. It is clear that $v_{\epsilon_{m}}-\widehat{v}$ matches the equality and inequality constraints of \eqref{eq1} for all $m$ and hence passing to the limit we have that $w^{*}$ satisfies the equalities and inequalities.

From the uniqueness of the minimizer of \eqref{eq1} we have that $w^{*}$ is equal to $v^{*}$. In particular this means that $v^{*}$ is the unique cluster point of the original sequence $(w_{\epsilon_{m}}^{1})_{m \in \mathbb{N}}$. Since the original sequence was arbitrarily chosen, we conclude that $\underset{\epsilon \rightarrow 0}{{\rm lim} \, v_{\epsilon}} = {\widehat{v} + v^{*}}$.
\end{proof}

\begin{remark}
Notice that the proof remains true if we take general convex inequalities. We simply need the constrained sets to be closed and convex; however we have
restricted to linear equality and inequality constraints for simplicity and because these arise most often
in practice.
\end{remark}


\section{Ensemble Kalman Inversion}
\label{sec:inverse}

\subsection{Inverse Problem}

In this section we show how a generic inverse problem may be formulated
as a partially observed dynamical system. This enables the machinery
from the preceding section \ref{sec:state} to be used to solve
inverse problems.

We are interested in the inverse problem of finding $u \in \cH_1$ from
$y \in \cH_2$ where
\begin{align*}
y=G(u)+\eta,\; \eta\sim N(0,\Gamma)\,.
\end{align*}
Time does not appear (explicitly) in this equation (although $G$ may
involve solution of a time-dependent differential equation, for example).
In order to use the ideas from
the previous section, we introduce a new variable $w=G(u)$ and rewrite the
equation as
\begin{align*}
w = &  G(u),\\
y = &  w + \eta.
\end{align*}
The key point about writing the equation this way is that the data $y$
is now linearly related to the variable $v=(u,w)^T$ and now we may apply the ideas of the previous section to the model by introducing the following dynamical system, taking $y_{j+1}=y$ as the given data:
\begin{align*}
u_{j+1} = &  u_j,\\
w_{j+1} = &  G(u_{j}),\\
y_{j+1} = &  w_{j+1} + \eta_{j+1}.
\end{align*}
If we introduce the new variables
\begin{subequations}
\label{eq:these}
\begin{align}
v=(u,w)^T, &\quad \Psi(v)=(u, G(u))^T\\
H =  [0, I],&\quad H^{\perp} = [I,0]\,,
\end{align}
\end{subequations}
and write $v_j=(u_j,w_j)^T$, we may write the dynamical system in the form
\begin{subequations}
\label{eq:new}
\begin{align}
v_{j+1} = & {} \Psi(v_j)\\
y_{j+1} = & {} Hv_{j+1} + \eta_{j+1},
\end{align}
\label{eq:artdyn}
\end{subequations}
\noindent which is exactly in the same form as in the previous section. We note that
$$Hv= w, \quad H^{\perp}v=u.$$

\subsection{Ensemble Kalman Inversion}

The prediction step and the Kalman gain are defined as in \eqref{eq:deenz2}, and
the solution of the optimization problem is given by \eqref{eq:update}. We now
simplify these formulae using the specific structure on $\Psi$, $v$, $H$
arising in the inverse problem and given in \eqref{eq:these}; this results in
block form vectors and matrices. First we note that
\begin{align*}
\widehat{C}_{j+1}=
\begin{bmatrix}
C_{j+1}^{uu} & C_{j+1}^{uw}\\
(C_{j+1}^{uw})^T & C_{j+1}^{ww}
\end{bmatrix}
\;\,, \quad
\bar{v}_{j+1}=
\begin{pmatrix}
\bar{u}_{j+1}\\
\bar{w}_{j+1}
\end{pmatrix}\,.
\end{align*}
Here $$\bar{u}_{j+1} = \frac{1}{N}\sum_{n=1}^Nu_{j}^{(n)},\;\bar{w}_{j+1} = \frac{1}{N}\sum_{n=1}^NG(u_{j}^{(n)}):=\bar{G}_j$$
and
\begin{align*}
C_{j+1}^{uw} &= \frac{1}{N}\sum_{n=1}^N(u_{j}^{(n)}-\bar{u}_{j+1})\otimes(G(u_{j}^{(n)})-\bar{G}_{j}),\\
C_{j+1}^{ww} &= \frac{1}{N}\sum_{n=1}^N(G(u_{j}^{(n)})-\bar{G}_{j})\otimes(G(u_{j}^{(n)})-\bar{G}_{j}),\\
C_{j+1}^{uu} &= \frac{1}{N}\sum_{n=1}^N(u_{j}^{(n)}-\bar{u}_{j+1})\otimes(u_{j}^{(n)}-
\bar{u}_{j{+1}}).
\end{align*}
The covariance $C^{ww}_{j+1}$ denotes the empirical covariance of the
ensemble in data space, $C^{uu}_{j+1}$ denotes the empirical
covariance of the ensemble in space of the unknown $u$,
and  $C^{uw}_{j+1}$ denotes the empirical cross-covariance
from data space to the space of the unknown.

Noting that $S_{j+1}=(C_{j+1}^{ww}+\Gamma)^{-1}$
we obtain
\begin{align}
K_{j+1}=
\begin{pmatrix}\label{eq:2}
C_{j+1}^{uw}(C_{j+1}^{ww}+\Gamma)^{-1}\\
C_{j+1}^{ww}(C_{j+1}^{ww}+\Gamma)^{-1}
\end{pmatrix}\,.
\end{align}
Combining equation \eqref{eq:2} with the update equation within
\eqref{eq:update} it follows that
\begin{align*}
\{v_j^{(n)}\}_{n=1}^{N} \rightarrow \{v_{j+1}^{(n)}\}_{n=1}^{N}
\end{align*}
and
\begin{align*}
\{H^{\perp}v_j^{(n)}\}_{n=1}^{N} \rightarrow \{H^{\perp}v_{j+1}^{(n)}\}_{n=1}^{N}
\end{align*}
and hence that
\begin{align*}
u_{j+1}^{(n)} = H^{\perp}v_{j+1}^{(n)} = u_j^{(n)}+ C_{j+1}^{uw}\bigl(C_{j+1}^{ww}+\Gamma\bigr)^{-1}\bigl(y_{j+1}^{(n)}-G(u_j^{(n)})\bigr).
\end{align*}
Thus we have derived the EKI update formula:
\begin{equation}
\label{eq:EnKF}
u_{j+1}^{(n)}  = u_j^{(n)}+ C_{j+1}^{uw}\bigl(C_{j+1}^{ww}+\Gamma\bigr)^{-1}\bigl(y_{j+1}^{(n)}-G(u_j^{(n)})\bigr).
\end{equation}
We note also that
\begin{equation}
\label{eq:EnKF2}
w_{j+1}^{(n)}  = G(u_j^{(n)})+ C_{j+1}^{ww}\bigl(C_{j+1}^{ww}+\Gamma\bigr)^{-1}\bigl(y_{j+1}^{(n)}-G(u_j^{(n)})\bigr).
\end{equation}
However $w_{j+1}^{(n)}$ is not needed to update the state and so plays no role in
this unconstrained EKI algorithm. (It may be used, however, to impose constraints
on observation space, as discussed in the next subsection.)

In summary we have derived the following algorithm for solution of the
unconstrained inverse problem:

\vspace{0.1in}
\begin{algorithm}
\caption{EKI Algorithm}
\label{alg:eki}
\begin{algorithmic}[1]
\State Choose $\{u_{0}^{(n)}\}^{N}_{n=1}$, $j=0$
\State Calculate forward model applications $\{G(u_j^{(n)})\}^{N}_{n=1}$
\State Update $\{{u}_{j+1}^{(n)}\}^{N}_{n=1}$ from \eqref{eq:EnKF}
\State $j \gets j+1$, go to~\ref{step1a}.
\end{algorithmic}
\end{algorithm}
\vspace{0.1in}

\subsection{Ensemble Kalman Inversion With Constraints}

\subsubsection{Formulation In The Original Variables}

We now consider imposing constraints on the optimization step
arising in ensemble Kalman inversion. As in the unconstrained
case we do this by formulating the problem as a special case of
the partially observed dynamical system, subject to constraints,
from the previous section.

To this end we formulate the constraints in the space of the unknown
and the data as follows:
\begin{subequations}
\label{eq:ieqz2}
\begin{align}
F^{u}u &= f^{u},\\
F^{w}w &= f^{w},\\
G^{u}u &\preceq g^{u},\\
G^{w}w &\preceq g^{w}{.}
\end{align}
\end{subequations}
The algorithm proceeds by predicting according to equation \eqref{eq:deenz},
and then optimizing \eqref{eq:deenz2}, all using the specific structure
\eqref{eq:these}, and with the optimization subject to the constraints
\eqref{eq:ieqz2}, written in the notation of the general Kalman
updating formulae in \eqref{eq:ieqz3-1}, detailed below;
in particular the rewrite \eqref{eq:ieqz3-1} of the constraints expresses
everything in terms of the variable $v$.
We may summarize the constraints as follows, to allow
direct application of the ideas of the previous section.
To this end define
\begin{subequations}
\label{eq:missed}
\begin{align}
F &= \begin{pmatrix}
F^{u}H^{\perp}\\
F^{w}H
\end{pmatrix}
= \begin{pmatrix}
F^{u} & 0\\
0 & F^{w}
\end{pmatrix}\\
G &= \begin{pmatrix}
G^{u}H^{\perp}\\
G^{w}H
\end{pmatrix}
= \begin{pmatrix}
G^{u} & 0\\
0 & G^{w}
\end{pmatrix}\\
f &= \begin{pmatrix}
	f^{u}\\
	f^{w}
\end{pmatrix}
,\, g = \begin{pmatrix}
	g^{u}\\
	g^{w}
		\end{pmatrix}\,.
\end{align}
\end{subequations}
Then the constraints \eqref{eq:ieqz2} may be written as
\begin{subequations}
\label{eq:ieqz3-1}
\begin{align}
Fv&=f,\\
Gv &\preceq g.
\end{align}
\end{subequations}
See Algorithm \ref{alg:6} for the resulting pseudo-code.

\vspace{0.1in}
\begin{algorithm}
\caption{Constrained EKI Algorithm}
\label{alg:ekiCon}
\begin{algorithmic}[1]
\State Choose $\{u_{0}^{(n)}\}^{N}_{n=1}$, $j=0$
\State Calculate forward model application $\{G(u_j^{(n)})\}^{N}_{n=1}$
\State Update $\{{u}_{j+1}^{(n)}\}^{N}_{n=1}$ from \eqref{eq:EnKF}
\State Update $\{{w}_{j+1}^{(n)}\}^{N}_{n=1}$ from \eqref{eq:EnKF2}
\NoDo
\For {$n=1:N$}
\NoThen
\If {${v}_{j+1}^{(n)}=({u}_{j+1}^{(n)},{w}_{j+1}^{(n)})$ violates constraints in \eqref{eq:ieqz3-1}}
\State ${v}_{j+1}^{(n)} \gets$ argmin of \eqref{eq:deenz2} subject to
\eqref{eq:these}, \eqref{eq:ieqz3-1}
\EndIf
\EndFor
\State Extract ${u}_{j+1}^{(n)}=H^{\perp} {v}_{j+1}^{(n)}.$
\State $j \gets j+1$, go to~\ref{step1c}.
\end{algorithmic}
\label{alg:6}
\end{algorithm}
\vspace{0.1in}

\subsubsection{Formulation In Range Of The Covariance}

We describe an alternative way to approach the derivation of the EKI update
formulae. We apply Theorem \ref{t:extra} with the specific structure
\eqref{eq:these}, \eqref{eq:ieqz3-1} arising from the dynamical system
used in EKI. To this end we define
\begin{subequations}
\label{eq:z22}
\begin{align}
J_{{\rm filter},j,n}(b) :=& \frac{1}{2} \mid y_{j+1}^{(n)} - G({u}_{j}^{(n)})
- B^wb \mid ^{2}_{\Gamma}+\frac{1}{2N}|b|^2\\
=& \frac{1}{2} b^T \left((B^w)^T\Gamma^{-1}B^w+\frac{1}{N}I\right)b - \left((B^w)^T\Gamma^{-1}(y_{j+1}^{(n)}-G(u_j^{(n)}))\right)^Tb +\text{const.}
\end{align}
\end{subequations}
where $b$ is the vector of $N$ scalar weights $b_m$ and
\begin{subequations}
\label{eq:missed2}
\begin{align}
B^ub &= \frac{1}{N}\sum_{m=1}^N b_m\left(u_j^{(m)}-\bar{u}_{j+1}\right),\\
B^wb &= \frac{1}{N}\sum_{m=1}^N b_m\left(G(u_j^{(m)})-\bar{G}_j\right),\\
Bb &=
\left(
\begin{array}{c}
B^u b\\
B^w b
\end{array}
\right).
\end{align}
\end{subequations}

Once this quadratic form has been minimized with respect to $b$ then the
update formula  \eqref{eq:deenz3} gives
\begin{subequations}
\label{eq:deenz33}
\begin{align}
u_{j+1}^{(n)}&={u}_{j}^{(n)}+\frac{1}{N}\sum_{m=1}^N b_m\bigl({u}^{(m)}_{j}-\bar{u}_{j+1}\bigr),\\
w_{j+1}^{(n)}&=G({u}_{j}^{(n)})+\frac{1}{N}\sum_{m=1}^N b_m\bigl(G({u}^{(m)}_{j})-\bar{G}_{j}\bigr).
\end{align}
\end{subequations}
Note that the vector $\{b_m\}$ depends on the particle label $n$;
as in the previous section, we have suppressed this dependence
for notational convenience.
We may now impose linear equality and inequality constraints on both $u$
and $w=G(u)$ (i.e. in parameter and data spaces) and minimize \eqref{eq:z22}
subject to these constraints. To be more specific if we impose the constraints
\eqref{eq:ieqz3-1} expressed in the variable $b$:
\begin{subequations}
\label{eq:ieqz3}
\begin{align}
FBb&=f-F\widehat{v}_{j+1}^{(n)},\\
GBb &\preceq g-G\widehat{v}_{j+1}^{(n)}.
\end{align}
\end{subequations}
Here $F,G,f$ and $g$ are given by \eqref{eq:missed}, $B$ is defined by
\eqref{eq:missed2} and
\begin{equation*}
\widehat{v}_{j+1}^{(n)} =
\left(
\begin{array}{c}
{u}_{j}^{(n)}\\
G({u}_{j}^{(n)})
\end{array}
\right).
\end{equation*}
See Algorithm \ref{alg:7} for the resulting pseudo-code.

\vspace{0.1in}
\begin{algorithm}
\caption{Constrained EKI algorithm formulated in range of covariance}
\label{alg:EKIConstraintedCovariance}
\begin{algorithmic}[1]
\State Choose $\{u_{0}^{(n)}\}^{N}_{n=1}$, $j=0$
\State Calculate forward model application $\{G(u_j^{(n)})\}^{N}_{n=1}$
\State Update $b^{(n)} \leftarrow$ argmin of (\ref{eq:z22}), $\{{u}_{j+1}^{(n)}\}^{N}_{n=1}$ and $\{{w}_{j+1}^{(n)}\}^{N}_{n=1}$ from (\ref{eq:deenz33})
\NoDo
\For {$n=1:N$}
\NoThen
\If ${v}_{j+1}^{(n)}=({u}_{j+1}^{(n)},{w}_{j+1}^{(n)})$ violates constraints in \eqref{eq:ieqz3}
\State $b^{(n)} \leftarrow$ argmin of (\ref{eq:z22}) subject to \eqref{eq:ieqz3}
\State Update $\{{u}_{j+1}^{(n)}\}$ and $\{{w}_{j+1}^{(n)}\}$ from (\ref{eq:deenz33})
\EndIf
\EndFor
\State $j \gets j+1$, go to~\ref{step1c}.
\end{algorithmic}
\label{alg:7}
\end{algorithm}
\vspace{0.1in}

\begin{remark}
As in the previous section, the result holds true for general convex inequality constraints; the linear case is considered for simplicity of exposition, and
because it is most frequently arising in practice.
\end{remark}

\begin{remark}
The EKI algorithm, with or without constraints,
has the following invariant subspace property:
define $\mathcal{A} = span(u^{(n)}_{0})_{n \in \{ 1,\cdots, N \}}$, then
for all $j$ in $\{0,\dots,J\}$ and for all $n$ in $\{1,\cdots,N\}$,
then the $u^{(n)}_{j}$ defined by the three algorithms in this section
all lie in $\mathcal{A}$.
This is a direct consequence of writing the update formulae in terms
of $b$ and noting \eqref{eq:deenz33}.
\end{remark}

We can now state a result analogous to  Theorem~\ref{t:extra}, and with proof
that is a straightforward corollary of that result, using the specific
structure \eqref{eq:these}:

\begin{theorem}
Suppose that the dimensions of $\cH_1$ and $\cH_2$ are finite. Suppose
also  that the specific structure \eqref{eq:these} is applied.
The problem of finding $u_{j+1}^{(n)}$ from the minimizer of
$I_{{\rm filter},j,n}(v)$, defined in \eqref{eq:deenz2} and subject to
the constraint \eqref{eq:ieqz2}, is equivalent to
finding $b$ that minimizes \eqref{eq:z22}, subject to \eqref{eq:ieqz3}, and
then using \eqref{eq:deenz33} to find $u_{j+1}^{(n)}$ from $b$.
Furthermore,
both of these constrained minimization problems have a unique solution provided
that the constraint sets are non-empty.
\end{theorem}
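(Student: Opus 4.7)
The plan is to derive this theorem as a direct corollary of Theorem \ref{t:extra} by verifying that the EKI setting is precisely an instance of the general constrained EnKF setting under the substitutions \eqref{eq:these}. The existence, uniqueness, and equivalence claims will then be inherited with no new analytic work required; the only task is careful bookkeeping to confirm that the objects appearing in the EKI statement match those in Theorem \ref{t:extra} after specialization.

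First I would observe that under the structure \eqref{eq:these}, the prediction step yields $\widehat{v}_{j+1}^{(n)} = (u_j^{(n)}, G(u_j^{(n)}))^T$, and the deviation vectors take block form $e^{(m)} = (u_j^{(m)}-\bar{u}_{j+1}, G(u_j^{(m)})-\bar{G}_j)^T$. Hence the linear map $B$ from the general formulation \eqref{eq:deenzu} splits as $Bb = (B^u b, B^w b)^T$ with $B^u$ and $B^w$ given in \eqref{eq:missed2}. Since $H = [0, I]$, we have $HB = B^w$ and $H\widehat{v}_{j+1}^{(n)} = G(u_j^{(n)})$, so the general objective \eqref{eq:z2} specializes to exactly \eqref{eq:z22}. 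Similarly, the block-diagonal $F$ and $G$ defined in \eqref{eq:missed} convert the componentwise constraints \eqref{eq:ieqz2} into the unified form \eqref{eq:ieqz3-1}, and the corresponding $b$-space constraints \eqref{eq:ieqz3} are exactly the instance of \eqref{eq:ieqzz} obtained with these specific $F$, $G$, $f$, $g$.

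Next I would apply Theorem \ref{t:extra} directly. That theorem asserts (a) existence and uniqueness of the minimizer of $I_{{\rm filter},j,n}(v)$ subject to \eqref{eq:ieqz3-1} whenever the constraint set is non-empty, (b) the same for the $b$-space minimization of $J_{{\rm filter},j,n}(b)$ subject to \eqref{eq:ieqz3}, and (c) equivalence of the two via \eqref{eq:deenzu}. Substituting the specializations established in the previous paragraph gives precisely the existence, uniqueness, and equivalence statements in the theorem, with $v_{j+1}^{(n)} = \widehat{v}_{j+1}^{(n)} + Bb$ splitting componentwise into the two identities of \eqref{eq:deenz33}.

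Finally, to recover $u_{j+1}^{(n)}$ from $v_{j+1}^{(n)}$ one simply applies $H^\perp = [I, 0]$, obtaining $u_{j+1}^{(n)} = H^\perp \widehat{v}_{j+1}^{(n)} + H^\perp B b = u_j^{(n)} + B^u b$, which is \eqref{eq:deenz33}(a). The only point that deserves a brief check is that the EKI constraint set is non-empty iff the specialized constraint set of Theorem \ref{t:extra} is non-empty; this is immediate from the block structure of $F$ and $G$ in \eqref{eq:missed}. There is no genuine obstacle in this proof beyond carefully unwinding notation; the whole argument is a direct application of Theorem \ref{t:extra} with the identifications above.
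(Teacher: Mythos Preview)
Your proposal is correct and matches the paper's approach exactly: the paper states this theorem as a result ``with proof that is a straightforward corollary of [Theorem~\ref{t:extra}], using the specific structure \eqref{eq:these}'' and provides no further argument. Your careful unwinding of the block structure to identify $HB = B^w$, $H\widehat{v}_{j+1}^{(n)} = G(u_j^{(n)})$, and the constraint correspondence is precisely the bookkeeping the paper leaves implicit.
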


\section{Numerical Results}
\label{sec:num}

This section contains numerical results which demonstrate the benefits of imposing constraints
on ensemble Kalman methods. Subsection \ref{ssec:state} concerns an application of state estimation (using EnKF)
in biomedicine, using real patient data, whilst subsection \ref{ssec:inverse} concerns {an} application of inversion (using EKI) in seismology and employs
simulated data.
When comparing results from the two experiments, recall that iterations of EKI
correspond to an algorithmic dynamics intended to converge to a single distribution (over ensemble members) on the parameters for which we invert, whereas
iterations of EnKF correspond to the incorporation of new data at
every physical measurement time,
and thus the distribution (over ensemble members) is not necessarily expected
to converge as the iteration progresses. {In both applications, minimizations were performed in \textsc{Matlab} using the default interior-point method (\texttt{fmincon}) through the general quadratic programming function, \texttt{quadprog}.}

\subsection{State Estimation}
\label{ssec:state}
Here we present an application of the constrained EnKF to the tracking and forecasting  of
human blood glucose levels. We use self-monitoring data collected by an individual with Type 2 Diabetes. We use the "P1" data set described by
Albers \emph{et al.} in \cite{albers2017personalized}; this dataset
includes measurements of blood glucose and consumed nutrition, and is publicly available on \texttt{physionet.org}.
For more information on the data, and on an unconstrained data assimilation approach using
the unscented Kalman filter, see \cite{albers2017personalized}.
We model the glucose-insulin system
with the ultradian model proposed by \cite{sturis1991computer}. The primary state variables are the
glucose concentration, $G$, the plasma insulin concentration, $I_{p}$, and the interstitial
insulin concentration, $I_{i}$; these three state variables are augmented
with a three stage delay $(h_1,h_2,h_3)$ which encodes a non-linear delayed hepatic glucose response to plasma insulin levels. The resulting ordinary differential equations have the form:
\begin{subequations}
	\label{eq:ultradian}
	\begin{align}
	   \frac{dI_p}{dt}&=f_1(G)-E(\frac{I_p}{V_p}-\frac{I_i}{V_i})-\frac{I_p}{t_p}\\
	   \frac{dI_i}{dt}&=E(\frac{I_p}{V_p}-\frac{I_i}{V_i})-\frac{I_i}{t_i}\\
	   \frac{dG}{dt}&=f_4(h_3)+m_G(t)-f_2(G)-f_3(I_i)G\\
	   \frac{dh_1}{dt}&=\frac{1}{t_d}(I_p-h_1)\label{UM6-4}\\
	   \frac{dh_2}{dt}&=\frac{1}{t_d}(h_1-h_2)\label{UM6-5}\\
	   \frac{dh_3}{dt}&=\frac{1}{t_d}(h_2-h_3)\label{UM6-6}
	\end{align}
\end{subequations}
Here
$m_G(t)$ represents a known rate of ingested carbohydrates,
$f_1(G)$ represents the rate of glucose-dependent insulin production,
$f_2(G)$ represents insulin-independent glucose utilization,
$f_3(I_i)G$ represents insulin-dependent glucose utilization and
$f_4(h_3)$ represents delayed insulin-dependent hepatic glucose production;
the functional forms of these parameterized processes can be found in the appendix, along with a description of model parameters.

In the EnKF setting, we write $u = [I_p, I_i, G, h_1, h_2, h_3]$, and use \eqref{eq:ultradian} to define $F$ such that
$$\frac{du}{dt}= F(u,t,\theta),$$ where $\theta$ contains model parameters.
We then extend the state vector in order to perform joint parameter estimation: $v = [u, R_g]^T$.

For the purposes of this paper, the function $m_G(t)$ may be viewed as known; it is determined from data describing meals consumed by the patient.
Since insulin ($I_p$ and $I_i$) and delay variables ($h_1$, $h_2$, and $h_3$) are not measured, whilst glucose is measured,
we define the measurement operator to be $H=[0,0,1,0,0,0,0]$.
The discrete time forward model is obtained by integrating the deterministic model in \eqref{eq:ultradian}
between consecutive measurement time-points and applying an identity map to $R_g$. Because these time-points may not be equally spaced, and
because the time-dependent forcings (meals) will differ in different time-intervals, this leads
to a map of the form
$$v_{j+1}=\Psi_j(v_j).$$
This is a slight departure from the methodology outlined in section \ref{sec:state}, where $\Psi$ does
not depend on $j$ (autonomous dynamics) but is a straightforward extension which the reader can easily
provide.

We present EnKF results from a single patient's data when run with and without constraints
(Algorithms~\ref{alg:enkf} and \ref{alg:enkfCon} respectively). We performed joint state-parameter
estimation, augmenting the state with parameter $R_g$ (see Appendix for details of where this
parameter appears) and adding identity-map dynamics for parameter $R_g.$
The following constraints were imposed:

\begin{equation}
  \begin{bmatrix}
    0.01 \\ 0.01 \\ 2000 \\ 0.01 \\ 0.01 \\ 0.01 \\ 0
  \end{bmatrix}
\preceq v \preceq
  \begin{bmatrix}
    10000 \\ 10000 \\ 40000 \\ 10000 \\ 10000 \\ 10000 \\ 1000000
  \end{bmatrix}
\end{equation}

Figure~\ref{fig:enkf-state-dist} compares the overall distribution of updated state means over time when running EnKF with and without these state constraints.
While individual particles in this experiment often violated the constraints, the overall updated means did not. Nevertheless, enforcement of lower-bound constraints shifts up the state distribution slightly. Note that upper bound constraints were never violated in this experiment.

Figure~\ref{fig:enkf-one-iter} shows a two-dimensional state projection of updated particles at a given time step before and after applying the constrained optimization. Note that particles may additionally violate constraints in unplotted dimensions---this explains why one particle whose unconstrained update appears to live within the constraints is in fact differently updated under the constrained optimization. Time step 126 was selected for illustrative purposes, and was the measurement event in which particles most often violated the constraints.

Figure~\ref{fig:enkf-vio} depicts the overall frequency of constraint violations. We observe that the the measured state (blood glucose) never violated a constraint, nor did the inferred parameter $R_g$. However, other model states did often violate constraints, and up to $30\%$ $(4/13)$ of particles simultaneously violated the constraints at a single time-step.

By adding constraints, we ensure that all the simulations which constitute the ensemble method are biologically plausible.

\begin{figure}[htbp]
\begin{center}
\includegraphics[trim = 0.in 0in 0.in 0in,clip,width = \textwidth]{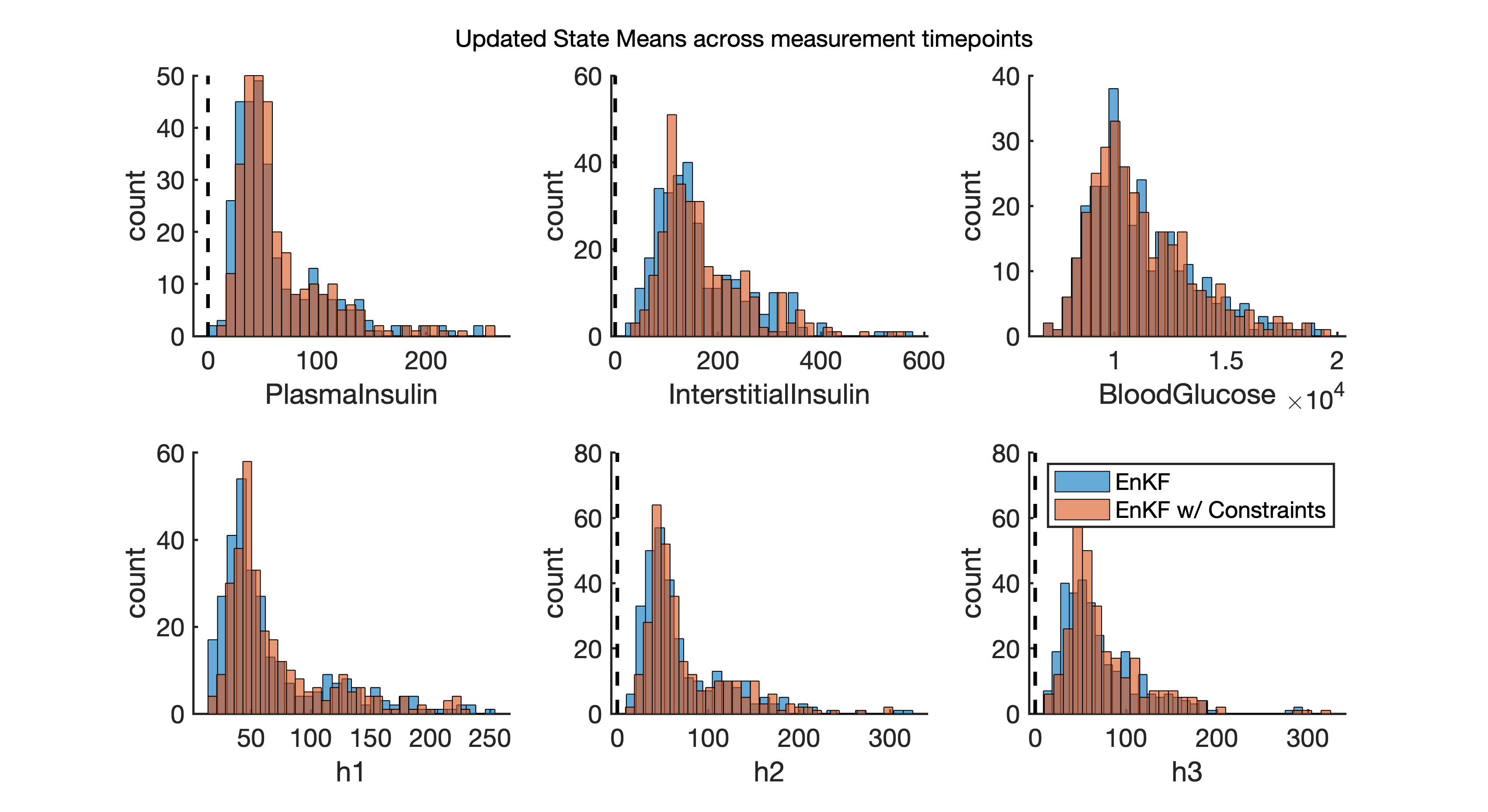}
\end{center}
\caption{The distribution of mean state updates when running EnKF with and without inequality constraints. Black vertical lines denote lower bound state constraints.}
\label{fig:enkf-state-dist}
\end{figure}
\begin{figure}[htbp]
\begin{center}
{\includegraphics[trim = 0in 0in 0in 0in,clip,width= \textwidth]{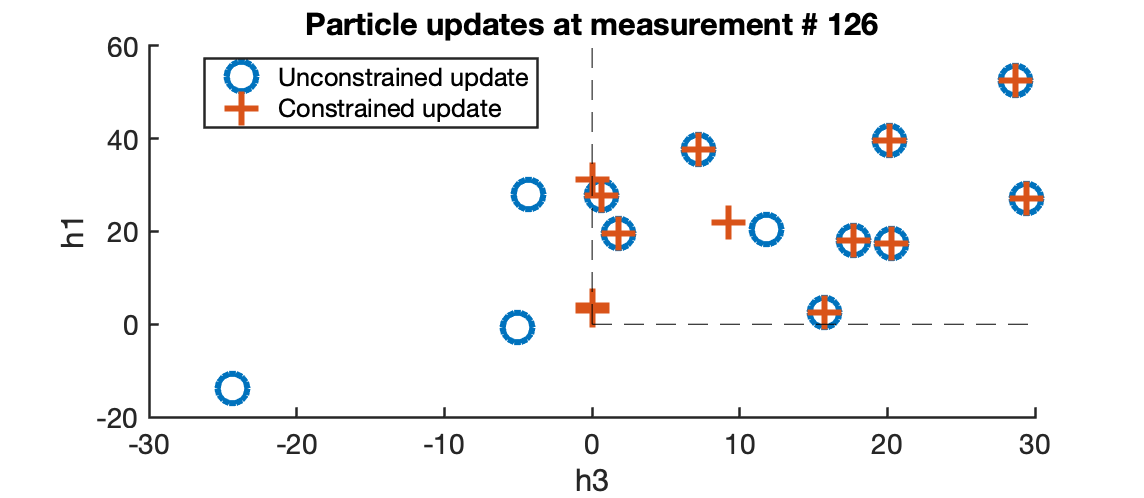}}
\end{center}
\caption{Particle updates at a given time-step (here, measurement 126) are shown using a traditional Kalman gain versus using the constrained optimization. The black lines denote lower bound constraints on the states $h_1$ and $h_3$.}
\label{fig:enkf-one-iter}
\end{figure}
\begin{figure}[htbp]
\begin{center}
{\includegraphics[scale=0.3]{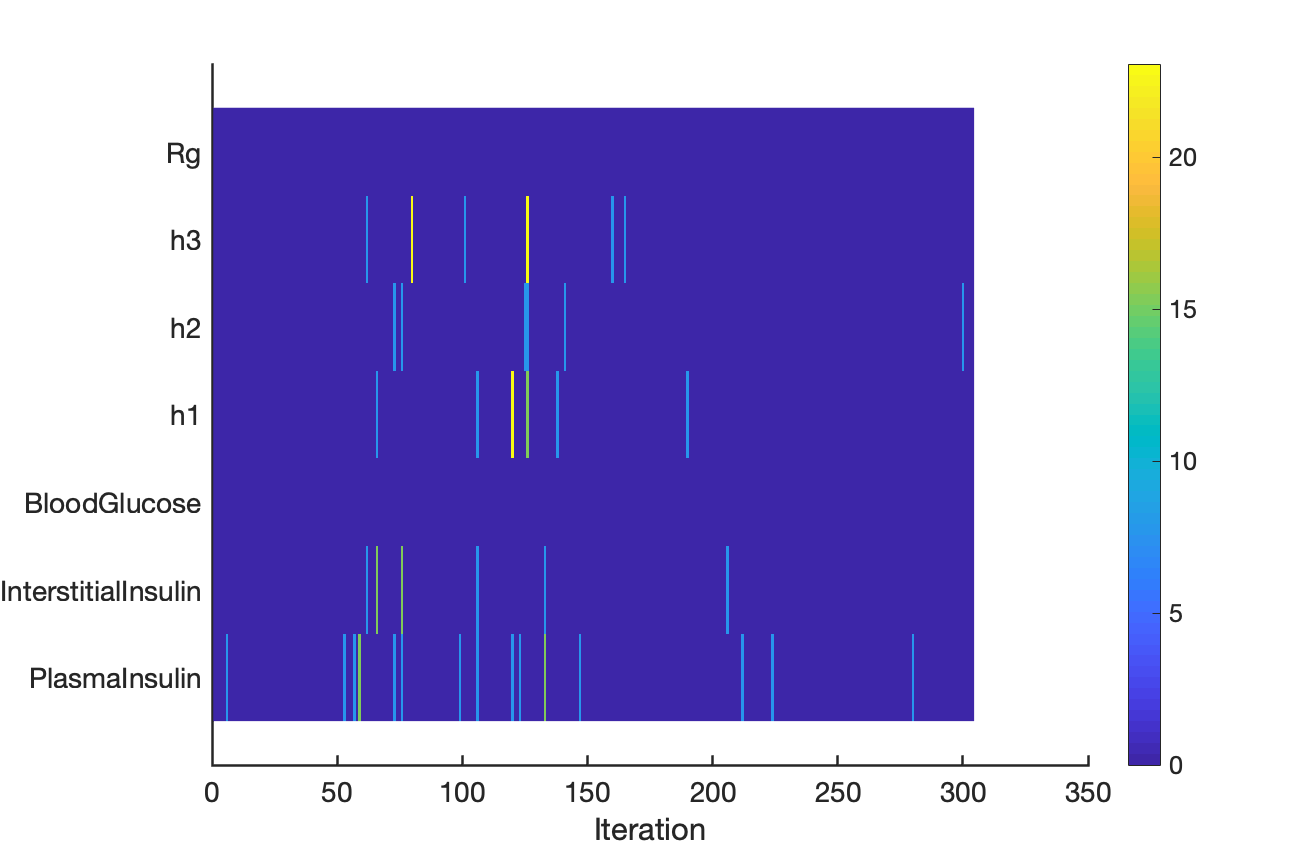}}
\end{center}
\caption{Percentage map of the constraint violations, where each lower-bound constraint is represented by a row. At each iteration, the percentage of particles that violated a constraint is color-coded, with yellow representing the largest proportion of constraint violations.}
\label{fig:enkf-vio}
\end{figure}

\subsection{Inverse Problem}
\label{ssec:inverse}
Here we  present application of the constrained EKI in seismology. We study near-surface site
characterization in which we invert for the shear wave velocity profile of the geomaterials
in the earth shallow crust, using downhole array data. For forward modeling, we consider a
semi-discrete form of the following wave equation in a horizontally stratified heterogeneous soil layer:
\begin{align*}
\frac{\partial}{\partial z} \left[ c_s^2(z) \frac{\partial d(z,t)}{\partial z}\right] - \frac{\partial^2 d(z,t)}{\partial t^2} = 0.
\end{align*}
Here $d(z,t)$ is the displacement field of the wave response as a function of spatial
variable  $z \in (0,H)$ and time variable $t \in (0,T]$. The function $c_s(z)$ is the
shear wave velocity function. We impose the following boundary and initial conditions:
\begin{align*}
d(H,t) = d_0(t), \quad {\partial d(0,t)}/{\partial z} = 0, \quad d(z,0) = 0, \quad {\partial d(z,0)}/{\partial t} = 0
\end{align*}
where $d_0(t)$ is the prescribed displacement at depth $z = H$. Generally, the shear wave velocity changes as a piecewise constant function with depth. If the layering information, i.e., the total number of layers and their thickness, is not available or is poorly characterized, it is desired to use a generic function
for site characterization, such as this:
\begin{align*}
c_s(z) =
\begin{cases}
c_{s0} & 0\leq z\leq z_0\\
c_{s0} \left(1+k(z-z_0)\right)^{n} & z_0\leq z \leq z_1\\
\alpha c_{s0} \left(1+k(z_1-z_0)\right)^{n} & z_1\leq z\leq H
\end{cases}\,.
\end{align*}
See, for example, \cite{shi2018generic}.
In the constrained EKI setting, $u = (c_{s0},k,z_0,n, z_1,\alpha)$ and
$$G(u) = \partial^2 d(0,t)/\partial t^2.$$
For the numerical example studied here, $G^u$ and $g^u$ are determined by enforcing the
constraints $0\leq c_{s0}\leq1000$, $0\leq k \leq 100$, $0 \leq z_0 \leq z_1$, $0\leq n \leq 1$, $ z_0 \leq z_1\leq H$, and $1\leq \alpha \leq 10$. We generate the initial ensemble by drawing samples from uniform distributions and discard members that violate the enforced constraints. In order to avoid very
large velocities at $z=z_1$, we also discard members with $c_s(z_1)>5000$m/s. If we perform
parameter learning using the unconstrained EKI, the experiment fails at $j=1$ because of incapability of the dynamic model to
propagate unphysical values of the shear wave velocity $c_s$.

All results shown use  Algorithm~\ref{alg:EKIConstraintedCovariance}. Figure~\ref{fig:eki-prob} shows
the ensemble distribution of $u$ at $j=2$ before and after enforcing constraints whilst
Figure~\ref{fig:eki-walk} shows the evolution of the updated ensemble. Note
that parameter $k$ saturates with an ensemble close to the upper bound of
$100$ imposed through constraints on this parameter; however experiments
in which we imposed different upper bounds on this parameter lead to
different estimates for $k$, with little change to the estimated
velocity profile and we conclude that this parameter suffers from
identifiability issues.
(Note that Figure~\ref{fig:eki-prob} displays the updated ensemble distribution at a single step in the sequence of ensemble updates, comparing the effect
of imposing constraints with neglecting them; in contrast Figure~\ref{fig:enkf-state-dist} shows the distribution over all measurement time-points of the
ensemble means. The figures thus illustrate different phenomena).

Moreover, Figure~\ref{fig:eki-vio}a shows the map of violation for different constraints enforced on parameters whilst Figure~\ref{fig:eki-vio}b shows the estimated generic $c_s$ profile after 40 iterations compared to the true profile and the initial estimate.
Figure~\ref{fig:eki-vio}a shows the key role employed by the enforcing
of constraints.
In this case the addition of constraints ensures that all the simulations
which constitute the ensemble method are physically meaningful, and also
that the forward model remains well-posed.

\begin{figure}[htbp]
\begin{center}
\includegraphics[trim = 0.in 0in 0.in 0in,clip,width = \textwidth]{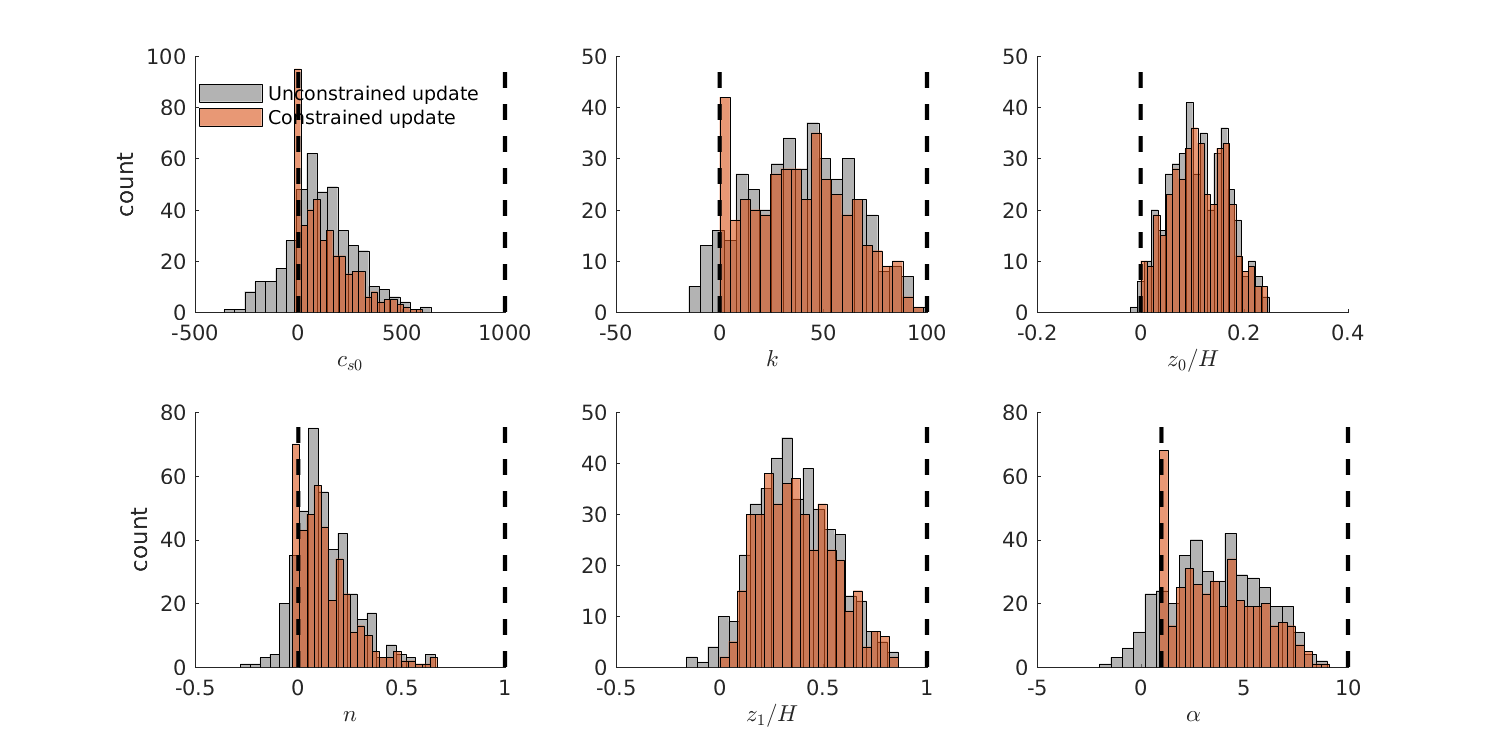}
\end{center}
\caption{The distribution of parameters before and after enforcing constraints in Algorithm~\ref{alg:EKIConstraintedCovariance} at iteration $j=2$. Black vertical lines denote the lower and upper bound constraints.}
\label{fig:eki-prob}
\end{figure}
\begin{figure}[htbp]
\begin{center}
{\includegraphics[trim = 0in 0in 0in 0in,clip,width= \textwidth]{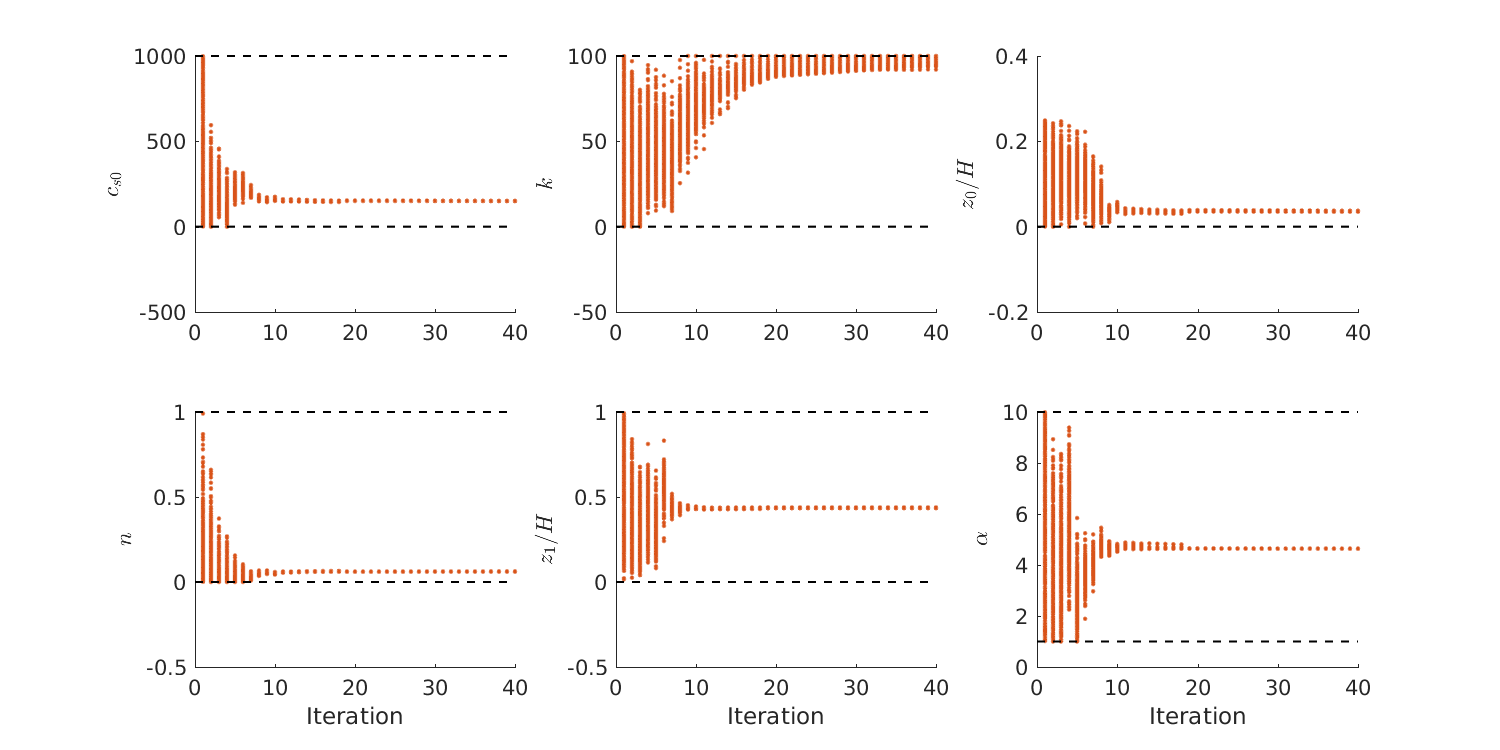}}
\end{center}
\caption{Evolution of the updated ensemble with iteration. Black horizontal lines denote the lower and upper bound constraints.}
\label{fig:eki-walk}
\end{figure}
\begin{figure}[htbp]
\begin{center}
\subfigure[]{\includegraphics[trim = 0in 2.5in 0.9in 2.9in,clip,height = 0.4\textwidth]{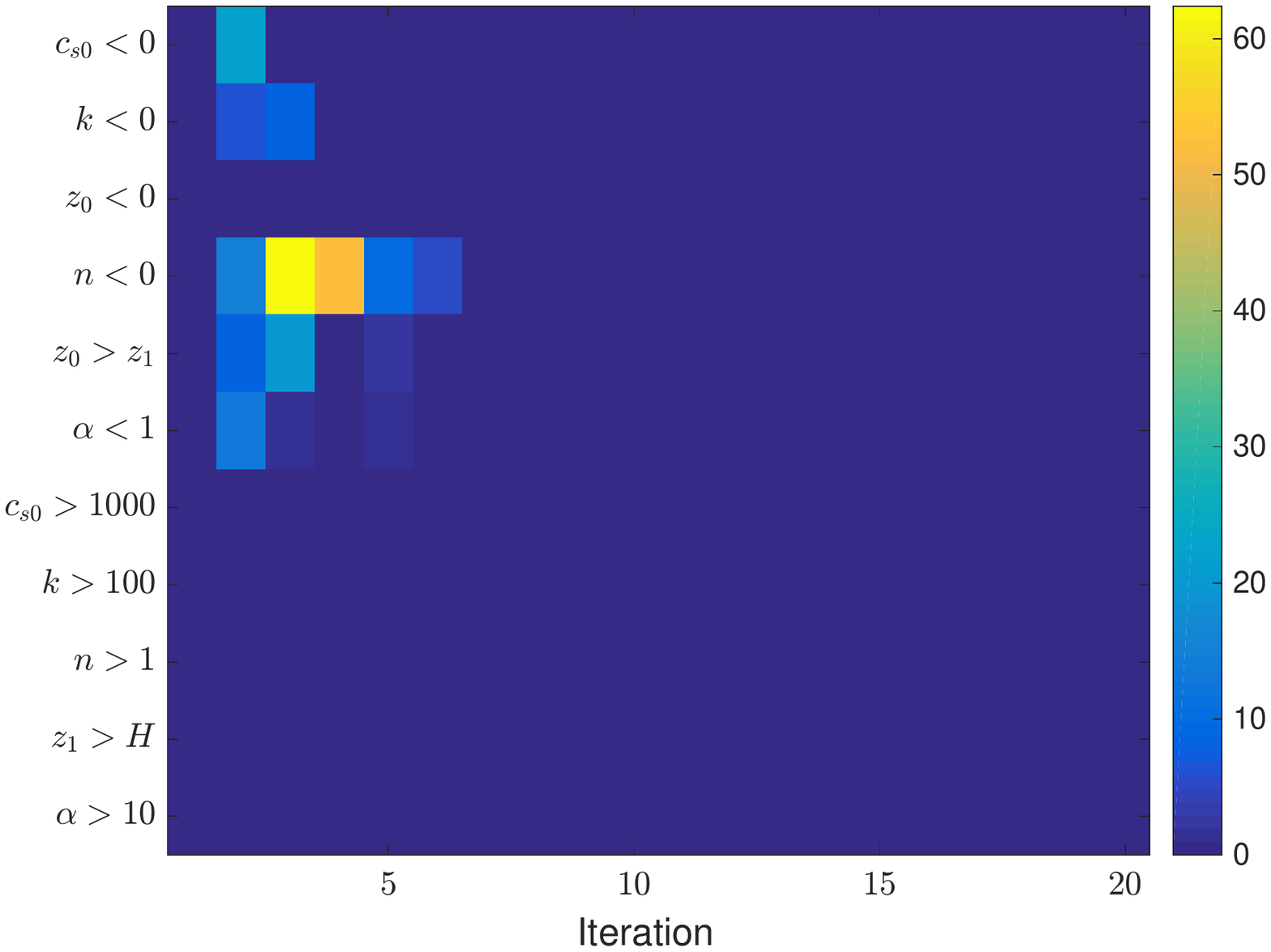}}
\hfill
\subfigure[]{\includegraphics[trim = 0in 0.0in 0.25in 0.15in,clip,height = 0.4\textwidth]{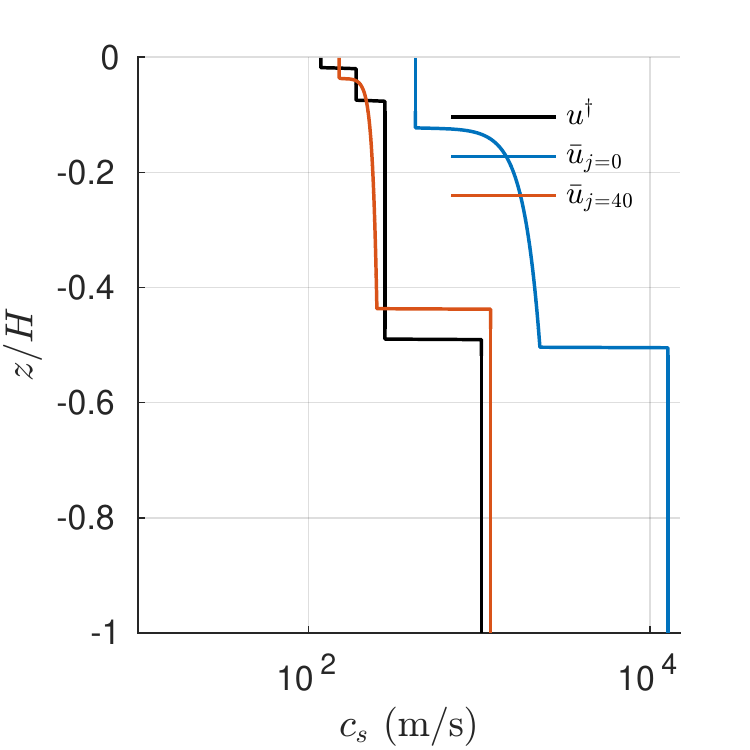}}
\end{center}
\caption{(a) The percentage map of the constraint violations for the first 20 iterations; (b) the estimated velocity profile ($\bar{u}_{j=40}$) compared to the true profile ($u^{\dagger}$) and the initial estimate ($\bar{u}_{j=0}$)}
\label{fig:eki-vio}
\end{figure}

\section{Conclusions}

Constraints arise naturally in many state and parameter estimation problems.
We have shown how convex constraints may be incorporated into ensemble
Kalman based state or parameter estimation algorithms with relatively
few changes to existing code: the standard algorithm is applied and for
any ensemble member which violates a constraint, a quadratic optimization
problem subject to convex constraints is solved instead.
We have written the resulting algorithms in easily digested pseudo-code,
we have developed an underpinning theory and we have given illustrative
numerical examples.

Two primary directions suggest themselves in this area. The first  is the
use of these methods in applications. As indicated in the introduction, our
general formulation is inspired by the two papers
\cite{wang2009state,janjic2014conservation} from the geosciences and we
have demonstrated applicability to problems from biomedicine and
seismology; but many other potential application domains are ripe for
application of ensemble Kalman methodology, because of its black-box
and derivative-free formulation, and the ability to impose constraints
in a straightforward fashion will help to extend this methodology.
The second is the theoretical analysis of these methods: can the inclusion
of constraints be used to deduce improved accuracy of state or parameter
estimates; or can the inclusion of constraints be used to demonstrate improved
performance as measured, for example, by proportion of model runs which are
physically (or biologically etc.) plausible? Furthermore, although the
imposition of constraints is reasonable, it is not clear that it may not
lead to pathologies in algorithmic performance and ruling out, or
understanding, the occurrence of such pathological behaviour may be important.

\ack
This work was funded by NIH-NLM grant RO1 LM012734. AMS was also funded by
AFOSR Grant FA9550-17-1-0185 and by ONR grant N00014-17-1-2079.
\section*{Appendix}

We give the details of the ultradian model of glucose-insulin dynamics
used as the forward model in subsection \ref{ssec:state}. An example
of the induced dynamics is given in Figure \ref{fig:UltradianModel}.

	\begin{align}
	   \frac{dI_p}{dt}&=f_1(G)-E(\frac{I_p}{V_p}-\frac{I_i}{V_i})-\frac{I_p}{t_p}\\
	   \frac{dI_i}{dt}&=E(\frac{I_p}{V_p}-\frac{I_i}{V_i})-\frac{I_i}{t_i}\\
	   \frac{dG}{dt}&=f_4(h_3)+m_G(t)-f_2(G)-f_3(I_i)G\\
	   \frac{dh_1}{dt}&=\frac{1}{t_d}(I_p-h_1)\label{UM6-4}\\
	   \frac{dh_2}{dt}&=\frac{1}{t_d}(h_1-h_2)\label{UM6-5}\\
	   \frac{dh_3}{dt}&=\frac{1}{t_d}(h_2-h_3)\label{UM6-6}
	\end{align}

	where, for $N$ meals at times $\{t_j\}_{j=1}^N$ with carbohydrate composition $\{m_j\}_{j=1}^N$

	\begin{align}
	    m_G(t)=\sum_{j=1}^N \frac{m_j k}{60} \exp(k(t_j-t)), \ \ N=\#\{t_j<t\}\label{UM6-7}
	\end{align}

	and

    \allowdisplaybreaks
	\begin{align}
	    f_1(G)&=\frac{R_m}{1+\exp(\frac{-G}{V_g c_1}+a_1)} :\text{the rate of insulin production}\label{UM6-8}\\
	    f_2(G)&=U_b(1-\exp(\frac{-G}{C_2V_g})) :\text{insulin-independent glucose utilization}\label{UM6-9}\\
	    f_3(I_i)&=\frac{1}{C_3V_g}(U_0+\frac{U_m-U_0}{1+(\kappa I_i)^{-\beta}}), \  f_3(I_i)G:\text{insulin-dependent glucose utilization}\label{UM6-10}\\
	    f_4(h_3)&=\frac{R_g}{1+\exp(\alpha(\frac{h_3}{C_5V_p}-1))} :\text{delayed insulin-dependent glucose utilization}\label{UM6-11}\\
	    \kappa&=\frac{1}{C_4}(\frac{1}{V_i}-\frac{1}{Et_i})
	\end{align}

\begin{figure}[htbp]
\begin{center}
{\includegraphics[scale=0.35]{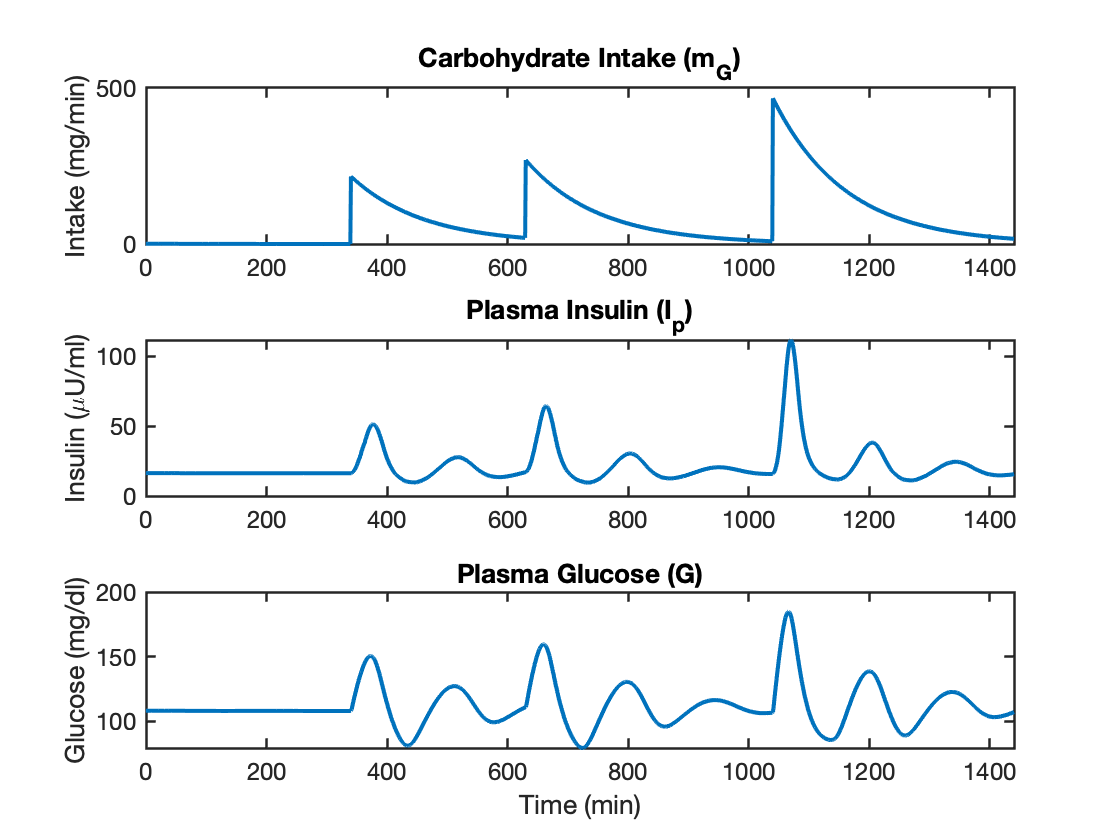}}
\end{center}
    \caption{Here we show the oscillating dynamics of the glucose-insulin response in the ultradian model, driven by an exponentially decaying nutritional driver $m_G$.}
\label{fig:UltradianModel}
\end{figure}

\section*{References}
\bibliography{EnKFmaster}
\bibliographystyle{iopart-num}
\end{document}